\documentclass[12pt,reqno,a4paper]{amsart}
\usepackage{extsizes}
\usepackage{hyperref}
\usepackage{amsmath,amsfonts,enumerate,amssymb}
\usepackage{mathtools}
\usepackage{tikz-cd}
\usepackage{xcolor}
\usepackage{amsmath}
\usepackage[dvipsnames]{xcolor}

\newcommand{\R}{\mathbb{R}}

\newcommand{\Cu}{\text{Cu}}

\newcommand{\coz}{\text{Coz}}
\newcommand{\qt}{\text{QT}}
\newcommand{\T}{\text{T}}
\newcommand{\Aff}{\mathrm{Aff}}
\newcommand{\LAff}{\mathrm{LAff}}
\newcommand{\K}{\mathcal K}

\newcommand{\eps}{\varepsilon}

\newcommand{\Prob}{\mathrm{Prob}}

\newtheorem{thm}{Theorem}[section]
\newtheorem{lem}[thm]{Lemma}
\newtheorem{prop}[thm]{Proposition}
\newtheorem{cor}[thm]{Corollary}

\newtheorem{rmk}[thm]{Remark}

\newcommand{\rk}{\operatorname{rank}}
\newtheorem{innercustomthm}{Theorem}
\newenvironment{customthm}[1]
  {\renewcommand\theinnercustomthm{#1}\innercustomthm}
  {\endinnercustomthm}

\newtheorem{defn}[thm]{Definition}

\title{New structure for Villadsen algebras \\ of the first type }
\author{Alejandro Cano}
\address{Department of Mathematics, Purdue University, 150 N University St, West Lafayette IN 47907, USA}
\email{cano13@purdue.edu}
\author{Andrew S. Toms}
\address{Department of Mathematics, Purdue University, 150 N University St, West Lafayette IN 47907, USA}
\email{atoms@purdue.edu}

\begin{document}

\begin{abstract}
In this paper we establish two main results concerning the comparison theory of positive elements in approximately homogeneous $C^{*}$-algebras with diagonal maps. The first is an existence theorem which interfaces with the rank problem; given a Villadsen algebra $A$, we construct a convex cone that induces an embedding of a semigroup into $Cu(A)$. This semigroup is identified with  the set of strictly positive, lower-semicontinuous affine maps on the trace simplex of $A$. The second is a construction of an AH algebra that produces an obstacle to classifying the approximate unitary orbits of positive elements in terms of their spectral data. Our counterexample sheds light on a critical topological obstruction when attempting to understand equivalence classes of $*$-homomorphisms from $C_{0}((0,1])$ to an arbitrary $C^{*}$-algebra of stable rank one.\end{abstract}
\maketitle

\section{Introduction}

For nearly 30 years, the classification program has made use of Villadsen’s algebras as the prototypical testing ground for the predisposition of $C^{*}-$algebras to be classified via algebraic invariants. These algebras imposed restrictions upon the class of $C^{*}$-algebras which were once believed to be classified by the Elliott invariant; consisting of the ordered K-theory, trace simplex, and the pairing. The tensorial absorption of the Jiang-Su algebra, slow-dimension growth, and strict comparison were all various traits which Villadsen’s examples realized as equivalent regularity properties. The recurring appearance of these properties led to the formulation of the well-known conjecture named after the second author and Wilhelm Winter. 

In the last decade, the focus of the Elliott program shifted to the classification of $*$-homomorphisms between $C^{*}$-algebras, now with the use of ordered total K-theory, and traces \cite{carrion2023classifyinghomomorphismsiunital}, particularly maps $A\to B$ where $A$ is nuclear, and $B$ is $\mathcal{Z}$-stable. This line of investigation raises questions about general target algebras which need not be well-behaved.  The two considerations in this context deal with existence of liftings of maps at the level of the invariant in question, as well as the issue of uniqueness of the lifting. In a recent paper by Wan and the second author\cite{toms2025selfadjointoperatorszstablecalgebras}, the existence of $*$-homomorphisms from $C_{0}((0,1])$ to a pure $C^{*}-$algebra has been characterized as liftings of continuous affine maps from the quasitrace simplex into the set of probability measures on $[0,1]$. 
A related avenue of study in the classification program has been the ``Rank problem"; it is well-known that each fixed positive element induces a lower-semicontinuous affine function on the space of quasitraces of the $C^*$-algebra. What has been a subject of study by many is proving the converse, namely that for each function with such qualities, there exists a positive element whose rank function realizes this map. The work in this direction in recent times can be found in \cite{AntoinePerera2022},\cite{PereraToms2011}, and \cite{DADARLAT20101209}. While confirming an answer to the rank problem is essential in understanding the scope of the recently confirmed regularity conjecture\cite{evington2026regularsimplenuclearcalgebras}, we seek to demonstrate that further structure is present in $C^*$-algebras, even if they are considered poorly behaved. This leads us to our first result of the paper:

\begin{customthm}{1}
\label{main_result_1}
    Suppose that $A=\varinjlim (A_{i},\varphi_{i})$ is a simple, type I Villadsen algebra, each $A_{i}=M_{r_{i}}\otimes C(X^{m_{i}})$, each $X^{m_i}$ is compact and metrizable with diagonal eigenvalue maps, negligible point evaluations, and $r_i\to \infty$. There is a subsemigroup of $Cu(A)$ denoted $\text{Rksp}(A)$ and it is $\mathbf{Cu}$-isomorphic to $\{0\}\sqcup\mathrm{LAff}(T(A))_{++}$.
\end{customthm}

It has already been shown by \cite{AntoinePerera2022} that in a separable $C^*$-algebra of stable rank one with no finite-dimensional representations, the realization of ranks is possible. We build upon this evidence to show that indeed, all rank functions are present, and further, there exists a sub-semigroup which encodes the structure seen in the Cuntz semigroup of a pure $C^*$-algebra. We have proved this fact in the context of an AH algebra with diagonal maps which, while less broad than the set of all stable rank one algebras, has a Cuntz semigroup that has yet to be computed, thus our result is a first step in understanding the fine structures of the invariant that governs the classification of these algebras.\\

Since all $*-$homomorphisms between $C_{0}((0,1])$ and a $C^{*}$-algebra $A$ are in bijection with positive contractions $a\in A$ via the spectral theorem, determining the uniqueness of a lifting of this type amounts to the study of the distance between the unitary orbits of two positive elements in a given $C^{*}$-algebra. 

Investigating metrics on positive operators on a Hilbert space $\mathcal{H}$ which help determine the distance between their unitary orbits dates back to the work of Hermann Weyl, where he established an equivalence of metrics on positive-definite matrices: namely the matching distance between their spectra counted with multiplicity, and the distance between their unitary orbits $d_U$. Later in 1984 Azoff and Davis' theorem generalized Weyl's result to self-adjoint operators acting on a finite dimensional Hilbert space. In the last four decades, there have been significant extensions of this fundamental idea, see \cite{hiai1989distance} for the theorem in the context of a von Neumann algebra, \cite{JACELON20153304} in the context of a pure $C^*$-algebra, and \cite{JACELON2021109068} where KK-theory helps tie the metrics together.\\

The reach of these results is very broad, and allows one to estimate or in certain cases compute directly the distance of the unitary orbits of two positive elements in terms of their spectral data in large classes of $C^*$-algebras. This furnishes a natural question: ``Can tensorial absorption of the Jiang-Su algebra be removed in order to expand the class of algebras for which this is possible?'' The interest in the relations among the metrics, $d_T$, or $d_P$ and $d_U$ is a matter of being able to simplify the comparison theory of positive elements. Villadsen’s examples once again emerge as the natural test class.\\ 

In order to understand what is meant by spectral data of an operator, we introduce the notion of a \emph{spectral profile}, which is central in \cite{toms2025selfadjointoperatorszstablecalgebras}. \\

\textbf{Definition} \ref{def2.26}: Let $A$ be a unital $C^*$-algebra. For every positive contraction $a\in A$, we define the spectral profile $f_a:T(A)\to \text{Prob}([0,1])^+$ by $f_a(\tau)=\mu_{\tau,a}$ where $\mu_{\tau,a}$ is the Radon measure induced by $\tau$ on $\sigma(a)$ by the Riesz-Markov-Kakutani representation theorem. Our second main result is the following: 

\begin{customthm}{2}
\label{main_result_2}
    There exists a simple, separable, AH $C^*$-algebra $A$ and two positive elements $a$ and $b$ in $A$ that are not approximately unitarily equivalent, yet $f_a(\tau) =f_b(\tau)$ for all $\tau\in T(A)$.
\end{customthm}

We restate this in more familiar terms to illustrate its importance for classification. 

There exist $a,b\in A_+$, two positive contractions which both have spectrum equal to $[0,1]$. By functional calculus, these are in correspondence with two unital $*$-monomorphisms $\varphi_a,\varphi_b:C[0,1]\to A$. Since $[0,1]$ is contractible, $K_0(C[0,1])\cong \mathbb{Z}$, and $K_1(C[0,1])\cong 0$. These two maps induce the same map at the level of K-theory. Our construction implies that both homomorphisms induce the same map on traces, for every $h\in C([0,1])$ and $\tau\in T(A)$, \begin{align}
    \tau(\varphi_a(h))=\tau(\varphi_b(h))
\end{align}
Despite their agreement on these invariants, $\varphi_a\nsim \varphi_b$.\\

Our construction in theorem \ref{main_result_2} follows that of the second author's paper \cite{TomsAnnals}, where we consider a Villadsen algebra with seed space containing two 2-spheres; we then leverage the fact that there are nontrivial vector bundles present and use them to generate positive elements of equal ranks. The functions $f_a, f_b$ are continuous affine maps from the trace simplex into the set of regular Borel probability measures on the unit interval\cite{rudin86real}. At each point (trace), these maps can provide the generalized rank of the positive elements $a$ and $b$ respectively. If purity is added as a condition on the target algebra, then we obtain a ``collapsing'' of the two distinct Cuntz classes onto the single class that depends solely on the generalized rank. This is also true for the algebras in \ref{main_result_1}. Thus, the semigroup $\text{LAff}(T(A),(0,\infty])$ acts as a spine onto which all rank functions collapse once the algebra has been tensored with $\mathcal{Z}$.

\textbf{AI statement:} An LLM was used for proofreading and reference checking. The mathematical content of the paper is due solely to the authors.

\section{Preliminaries}
\subsection{Comparison of positive elements.}\hspace{1in}\\
We refer the reader to the surveys \cite{ara2009ktheoryoperatoralgebrasclassification} and \cite{GardellaPerera} for the basic theory of the Cuntz semigroup and dimension functions.  For the convenience of the reader we do at least recall here that for a unital C$^*$-algebra $A$ and normalized quasitrace $\tau:A \to \mathbb{C}$, the associated dimension function on $A_+$ is given by
\begin{equation*}
 d_\tau(a) := \lim_{n \to \infty} \tau\left(a^{1/n}\right)
\end{equation*}

\begin{prop}\cite{GardellaPerera}\label{gardella-dim-fns} 
Let $A$ be a unital C$^*$-algebra. Given $\tau \in QT(A)$, the map $d_\tau$
is well-defined on $\Cu(A)$. Moreover,
\begin{enumerate}
\item[(i)] $d_\tau(0)=0$;
\item[(ii)] $d_\tau([1])=1$;
\item[(iii)] $d_\tau(s+t)=d_\tau(s)+d_\tau(t)$ for all $s,t\in \Cu(A)$;
\item[(iv)] $d_\tau(s)\leq d_\tau(t)$ whenever $s\leq t$ in $\Cu(A)$;
\item[(v)] $d_\tau$ preserves suprema of increasing sequences.
\end{enumerate}
\end{prop}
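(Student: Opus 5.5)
The plan is to define $d_\tau$ on $\Cu(A)$ through the dimension function on $(A\otimes\K)_+$ and then read off each property from the corresponding fact about Cuntz subequivalence. Throughout I extend $\tau$ to a lower semicontinuous quasitrace on $(A\otimes\K)_+$ and set $d_\tau(a)=\lim_{n}\tau(a^{1/n})$; this equals $\sup_{\eps>0}\tau(f_\eps(a))$, where $f_\eps$ is the continuous function equal to $0$ on $[0,\eps/2]$, equal to $1$ on $[\eps,\infty)$, and linear in between.

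\textbf{Monotonicity and well-definedness.} The key step is to show that $a\precsim b$ implies $d_\tau(a)\le d_\tau(b)$. Well-definedness on $\Cu(A)$ then follows at once from antisymmetry, and (iv) is the same statement. Suppose $a\precsim b$ and fix $\eps>0$. By R{\o}rdam's lemma there is $x$ with $(a-\eps)_+=x^*x$ and $xx^*\in\overline{bAb}$.
\begin{enumerate}
\item[(1)] The quasitrace identity $\tau(x^*x)=\tau(xx^*)$, applied through functional calculus, gives $\tau(g(x^*x))=\tau(g(xx^*))$ for continuous $g$ with $g(0)=0$.
\item[(2)] Hence $d_\tau((a-\eps)_+)=d_\tau(xx^*)$.
\item[(3)] For a positive element $c$ of the hereditary subalgebra $\overline{bAb}$ we have $d_\tau(c)\le d_\tau(b)$. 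Indeed, $\tau(c^{1/n})\le\|c\|^{1/n}\tau(\chi)$ where $\chi$ is an approximate unit for $\overline{bAb}$, and $\tau$ of any approximate unit increases to $d_\tau(b)$ by lower semicontinuity.
\end{enumerate}
Taking the supremum over $\eps$ and using $d_\tau(a)=\sup_\eps d_\tau((a-\eps)_+)$ gives $d_\tau(a)\le d_\tau(b)$. I expect step (3), the passage from an arbitrary element of the hereditary subalgebra to $b$ itself, to be the main technical point. Equivalently, one must prove that $d_\tau$ is the supremum of $\tau$ over the hereditary subalgebra, and this is where lower semicontinuity of the extended quasitrace is really used.

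\textbf{Properties (i) and (ii).} These are immediate. We have $\tau(0)=0$. Since $1^{1/n}=1$ for every $n$, we get $d_\tau(1)=\tau(1)=1$ by normalization.

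\textbf{Additivity (iii).} Represent $s+t$ by $a\oplus b$. Then $(a\oplus b)^{1/n}=a^{1/n}\oplus b^{1/n}$, and these two summands are orthogonal. Quasitraces are additive on commuting elements, so $\tau(a^{1/n}\oplus b^{1/n})=\tau(a^{1/n})+\tau(b^{1/n})$. Passing to the limit in $n$ gives additivity.

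\textbf{Suprema (v).} Let $s_k\nearrow s$ in $\Cu(A)$ with $s=[a]$. Monotonicity already gives $\sup_k d_\tau(s_k)\le d_\tau(s)$. For the reverse inequality, fix $\eps>0$. Since $[(a-\eps)_+]\ll[a]$, there is some $k$ with $[(a-\eps)_+]\le s_k$. Monotonicity then gives $d_\tau((a-\eps)_+)\le d_\tau(s_k)$. Taking the supremum over $\eps$ and using $d_\tau(a)=\sup_\eps d_\tau((a-\eps)_+)$, which follows from lower semicontinuity of $\tau$, yields $d_\tau(s)\le\sup_k d_\tau(s_k)$.
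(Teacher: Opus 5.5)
The paper gives no proof of this proposition; it only cites \cite{GardellaPerera}. Your outline is the standard argument: R{\o}rdam's lemma together with $\tau(x^*x)=\tau(xx^*)$ for monotonicity and well-definedness, orthogonal additivity for (iii), and $[(a-\eps)_+]\ll[a]$ for (v). It is correct except for the justification of step (3), which does not work as written. The inequality $\tau(c^{1/n})\le\|c\|^{1/n}\tau(\chi)$ is false for a single element $\chi$ of an approximate unit, because $c^{1/n}\le\|c\|^{1/n}\chi$ need not hold. Lower semicontinuity only gives $\tau(\lim_\lambda y_\lambda)\le\liminf_\lambda\tau(y_\lambda)$. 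So it cannot, by itself, produce the upper bound $\sup_\lambda\tau(\chi_\lambda)\le d_\tau(b)$ that your argument needs for an arbitrary approximate unit.

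The repair is to apply lower semicontinuity in the right direction, with a specific approximate unit. The family $(f_\delta(b))_{\delta>0}$ is an approximate unit for $\overline{b(A\otimes\K)b}$, and $c^{1/n}$ lies in this hereditary subalgebra. Hence $f_\delta(b)c^{1/n}f_\delta(b)\to c^{1/n}$ in norm, while $f_\delta(b)c^{1/n}f_\delta(b)\le\|c\|^{1/n}f_\delta(b)$. Monotonicity and lower semicontinuity of $\tau$ then give $\tau(c^{1/n})\le\|c\|^{1/n}\sup_\delta\tau(f_\delta(b))=\|c\|^{1/n}d_\tau(b)$, and letting $n\to\infty$ gives $d_\tau(c)\le d_\tau(b)$.

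Alternatively, you can avoid hereditary subalgebras altogether. R{\o}rdam's lemma gives $(a-\eps)_+=r(b-\delta)_+r^*$. With $x=(b-\delta)_+^{1/2}r^*$ you get $xx^*\le\|r\|^2(b-\delta)_+\le\|r\|^2b$. Operator monotonicity of $t\mapsto t^{1/n}$ then gives $\tau((xx^*)^{1/n})\le\|r\|^{2/n}\tau(b^{1/n})$ directly (normalize $\|b\|\le1$ first). With either fix, the rest of your proof goes through unchanged.
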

Again with $A$ unital we use $\qt(A)$ to denote the set of all normalized quasitraces on $A$. Since we concern ourselves with nuclear $C^*$-algebras in this paper, by a theorem of Haagerup, we know that for an exact $C^*$-algebra, all quasitraces are traces and we write $T(A)$ for the tracial states on $A$. \\

\begin{defn}
    Suppose that $A$ is a $C^*$-algebra. Let $a,b\in A_+$. We say that $a$ is Cuntz-subequivalent to $b$, $a\precsim b$ whenever there exists a sequence $(x_i)_{i=1}^{\infty}$ in $A$ such that $a=\lim_i (x_ibx_i^* )$ in the norm topology.
\end{defn}

\begin{defn}
    Let $A$ be a $C^*$-algebra, and let $\mathcal{K}$ denote the compact operators on $\ell^2(\mathbb{N})$. We define an equivalence relation $\sim$ on $(A\otimes \mathcal{K})_+$ as follows: $a\sim b$ whenever both $a\precsim b$ and $b\precsim a$.
\end{defn}

\begin{defn}
    Let $A$ be a $C^*$-algebra, its Cuntz semigroup denoted $Cu(A)$ is defined as $(A\otimes \mathcal{K})_+/\sim$. The binary operation is defined by $\langle a\rangle+\langle b\rangle=\langle a\oplus b\rangle$.
\end{defn}

\begin{defn}\cite{GardellaPerera}
Let $(S,\leq)$ be an ordered set. We define an additional relation $\ll$ on $S$,
called (sequential) compact containment, as follows: for $s,t \in S$, we set
$s \ll t$ if whenever $(x_n)_{n\in\mathbb N}$ is an increasing sequence in $S$
with supremum $x$ satisfying $t \leq x$, then there exists $n \in \mathbb N$
such that $s \leq x_n$.

We say that $s \in S$ is compact if $s \ll s$.
\end{defn}

\begin{defn}\cite{GardellaPerera}\label{def:cuntz-semigroup-axioms}
Let $(S,\leq)$ be a positively ordered monoid. We say that $S$ is an
(abstract) Cuntz semigroup, or just a Cu-semigroup, if it satisfies the
following so-called axioms:

\begin{enumerate}
\item[(O1)] Every increasing sequence has a supremum.
\item[(O2)] For every $s \in S$, there is a sequence $(s_n)_{n\in\mathbb N}$ in $S$
with $s_n \ll s_{n+1}$ and $s = \sup_{n\in\mathbb N} s_n$.
\item[(O3)] If $s \ll t$ and $s' \ll t'$, then $s+s' \ll t+t'$.
\item[(O4)] If $(s_n)_{n\in\mathbb N}$ and $(t_n)_{n\in\mathbb N}$ are increasing
sequences, then
\[
\sup_{n\in\mathbb N}(s_n+t_n)
=
\sup_{n\in\mathbb N}s_n
+
\sup_{n\in\mathbb N}t_n.
\]
\end{enumerate}

Given Cu-semigroups $S$ and $T$, a Cu-morphism between them is a map
$f:S\to T$ preserving addition, neutral element, order $\leq$, suprema of
increasing sequences, and also the compact containment relation $\ll$. Maps
that preserve all the structure except possibly for $\ll$ are called generalized
Cu-morphisms.
\end{defn}

\begin{defn}
    Let $A$ be a $C^*$-algebra. We define the following spaces for clarity:
    \begin{align}
        \text{Aff}(T(A)):=&\{f:T(A)\to \mathbb{R}: f\text{ is continuous and affine}\}\\
        \text{Aff}(T(A))_+:=&\{f\in \text{Aff}(T(A)):f\geq 0\}\\
        \text{Aff}(T(A))_{++}:=&\{f\in \text{Aff}(T(A)):f> 0\}       
    \end{align}
 We also follow this notation for the set of lower-semicontinuous and affine functions $\text{LAff}\big(T(A)),[0,\infty]\big)$
\end{defn}

\begin{defn}
    Let $A$ be a unital $C^*$-algebra, suppose that $T(A)\neq \emptyset$. Let $\tau\in T(A)$ and $c\in M_{s}(A)_+$. Extend $\tau$ by 
    \begin{align}
        \tau^{(s)}:=\text{Tr}_{s}\otimes \tau
    \end{align}
    where $\text{Tr}_{s}$ is the unnormalized trace on $M_s(\mathbb{C})$. Further, define the amplified dimension function 
    \begin{align}
        \hat c(\tau):=d_{\tau^{(s)}}(c)
    \end{align}
\end{defn}

\begin{rmk}
    If in the definition above $A=M_n\otimes C(X)$ for $X$ a compact metric space, and $\tau=\tau_x$ an extreme trace, we get that $\hat c(\tau_x)=\frac{\text{rank}(c(x))}{n}$
\end{rmk}
\subsection{Hilbert Modules}\hspace{1in}\\

While the definition above for the Cuntz semigroup deals with the comparison and equivalence classes of positive elements after stabilization, it has been formulated in other ways, namely via the Cuntz equivalence classes of countably generated Hilbert $C^*$-modules over the algebra in question. For a concise treatment of Hilbert $C^*$-module theory, we refer the reader to \cite{Lance1995}.

We occasionally write $E_A$ to denote a generic (right) Hilbert module $E$ over a $C^*$-algebra $A$. 
\begin{defn}
    We denote by $\mathcal{L}(E_A)$ the set of all bounded, linear, adjointable operators $T:E_A\to E_A$ for a right Hilbert $A$-module $E_A$. We write $\mathcal{K}(E_A)$ to mean the norm closure of the ``finite rank" operators. 
\end{defn}

\begin{defn}
    Let $E$ and $F$ be two Hilbert $A$-modules such that $E\subseteq F$. We say $E$ is compactly contained in $F$, $E\subset\subset F$ if there exists a self-adjoint compact endomorphism $T$ of $F$ such that $T|_{E}=Id_E$.
\end{defn}

\begin{defn}
    Let $E$ and $F$ be two Hilbert $A$-modules. We write $E\precsim F$ if every compactly contained submodule $E'\subset\subset E$ is isomorphic to a compactly contained submodule of $F$. Cuntz equivalence is the symmetrization of this subequivalence.
\end{defn}

\begin{thm}\cite{GardellaPerera}
    Let $A$ be a $C^*$-algebra. The Cuntz semigroup of $A$, $Cu(A)$ is isomorphic to the semigroup generated by Cuntz equivalence classes of countably generated Hilbert $A$-modules endowed with direct sum as the binary operation. Moreover, the Hilbert $A$-module corresponding to a given positive element $a\in (A\otimes \mathcal{K})$ is $\overline{a(\ell^{2}(\mathbb{N})\otimes A)}$. 
\end{thm}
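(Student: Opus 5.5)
The plan is to follow the Coward--Elliott--Ivanescu strategy and build the correspondence $\langle a\rangle \mapsto [E_a]$, where $E_a:=\overline{a(\ell^2(\mathbb{N})\otimes A)}$ and $a\in (A\otimes\K)_+$. Write $H_A=\ell^2(\mathbb{N})\otimes A$ and identify $A\otimes\K$ with $\K(H_A)$, so that $E_a$ is a closed, countably generated submodule of $H_A$.

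\emph{Surjectivity.} Let $E$ be a countably generated Hilbert $A$-module. By Kasparov's stabilization theorem, $E\oplus H_A\cong H_A$, so $E$ is isomorphic to a complemented submodule of $H_A$. Because $E$ is countably generated, $\K(E)$ is $\sigma$-unital and has a strictly positive element $h$. Viewing $h$ inside $\K(H_A)\cong A\otimes\K$ through the complemented embedding gives $\overline{hH_A}=E$. Hence every countably generated module arises as some $E_a$.

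\emph{Order and well-definedness.} The main step is to prove that $a\precsim b$ if and only if $E_a\precsim E_b$; this gives well-definedness and injectivity at once.
\begin{enumerate}
\item I first show that the modules $E_{(a-\eps)_+}$ are cofinal among the compactly contained submodules of $E_a$. On one side, $E_{(a-\eps)_+}\subset\subset E_a$ is witnessed by $T=g_\eps(a)$ for a continuous function $g_\eps$ equal to $1$ on $[\eps,\infty)$. On the other side, if $E'\subset\subset E_a$ is witnessed by a compact $T$, approximating $T$ by elements of the hereditary subalgebra $\overline{a\K(H_A)a}$ shows that $E'$ sits compactly inside some $E_{(a-\eps)_+}$.
\item For the forward direction, I use R\o rdam's lemma. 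If $a\precsim b$, then for each $\eps>0$ there are $\delta>0$ and $x$ with $(a-\eps)_+=x(b-\delta)_+x^*$. Put $y=x(b-\delta)_+^{1/2}$. The partial isometry in the polar decomposition of $y$ (taken in the bidual but acting on the relevant submodules) gives a Hilbert-module isomorphism
\[
E_{(a-\eps)_+}=\overline{yy^*H_A}\cong \overline{y^*yH_A}\subseteq E_{(b-\delta)_+}\subset\subset E_b .
\]
Combined with the cofinality from step 1, this yields $E_a\precsim E_b$.
\item For the converse, take an isomorphism $u\colon E_{(a-\eps)_+}\to F$ with $F\subset\subset E_b$. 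Compose $u$ with the compact operator $(a-\eps)_+^{1/2}$ to get $v=u\,(a-\eps)_+^{1/2}\in\K(H_A)$. Then $v^*v=(a-\eps)_+$, and $vv^*$ lies in the hereditary algebra of $b$, so $(a-\eps)_+\sim vv^*\precsim b$. Letting $\eps\to0$ gives $a\precsim b$.
\end{enumerate}

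\emph{Addition.} Using the unitary $H_A\oplus H_A\cong H_A$, the module $E_{a\oplus b}$ is identified with $E_a\oplus E_b$, so direct sum of modules corresponds to addition in $\Cu(A)$.

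I expect the main obstacle to be step 3 of the order argument. The issue is that an abstract module isomorphism is not implemented by an operator on all of $H_A$, so one must check that $u\,(a-\eps)_+^{1/2}$ genuinely defines a compact operator on $H_A$, which requires extending it by zero on a complement. One also needs the compact-containment witness for $F$ to ensure $vv^*$ really lands under $b$. This same bookkeeping is what makes the module relation $\precsim$ transitive, so I would settle this technical lemma first and derive both directions of the order equivalence from it.
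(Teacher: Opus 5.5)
The paper gives no proof of this statement; it is quoted from Gardella--Perera (the result goes back to Coward--Elliott--Ivanescu). Your outline is the standard argument from that literature. Surjectivity via Kasparov stabilization, the first half of step 1, the R\o rdam-lemma/polar-decomposition isomorphism in step 2, and additivity are all fine.

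\textbf{The gap: the second half of step 1.} It is false that every $E'\subset\subset E_a$ is contained in some $E_{(a-\eps)_+}$. Take $A=\mathbb{C}$ and $a=\sum_n \frac1n e_{nn}\in\K$. Then $E_a=\ell^2(\mathbb{N})$, and each $E_{(a-\eps)_+}$ is the finite span of $\{e_n : n<1/\eps\}$. The line spanned by $v=\sum_n 2^{-n}e_n$ is compactly contained in $E_a$ (witness: the rank-one projection onto $v$), but it lies in no $E_{(a-\eps)_+}$. Approximating the witness $T$ by $g_\eps(a)Tg_\eps(a)$ can at best give containment up to isomorphism, and even that needs a further comparison argument you have not supplied. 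As written, step 2 therefore does not embed an arbitrary $E'\subset\subset E_a$ into $E_b$.

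\textbf{The repair: cut down the witness instead of $a$.}
\begin{enumerate}
\item Regard $T$ as an element of $\K(E_a)\cong\overline{a\K(H_A)a}$ and put $c=T^2$.
\item For $\xi\in E'$ one has $k(c)\xi=k(1)\xi$ for every continuous $k$. Choosing $k$ vanishing on $[0,\tfrac12]$ with $k(1)=1$ gives $E'\subseteq\overline{(c-\tfrac12)_+H_A}$.
\item Since $c\precsim a\precsim b$, R\o rdam's lemma gives $(c-\tfrac12)_+=x(b-\delta)_+x^*$. Your polar-decomposition step then embeds $\overline{(c-\tfrac12)_+H_A}$, hence $E'$, isomorphically into $E_{(b-\delta)_+}$.
\item Every submodule of $E_{(b-\delta)_+}$ is compactly contained in $E_b$, witnessed by $g_\delta(b)$.
\end{enumerate}
This proves $E_a\precsim E_b$ with no cofinality statement at all.

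\textbf{Step 3.} The difficulty you flag is real, but extension by zero on a complement is not available: neither $F$ nor $E_{(a-\eps)_+}$ need be complemented in $H_A$. What works is compactness.
\begin{enumerate}
\item Write $(a-\eps)_+^{1/2}=(a-\eps)_+^{1/4}(a-\eps)_+^{1/4}$. This is a norm limit of finite sums of operators $\theta_{\xi,\eta}$ with $\xi\in E_{(a-\eps)_+}$.
\item Since $u\circ\theta_{\xi,\eta}=\theta_{u\xi,\eta}$, the operator $v=u\circ(a-\eps)_+^{1/2}$ lies in $\K(H_A)$.
\item The identity $v^*v=(a-\eps)_+$ follows because $u$ preserves inner products.
\end{enumerate}
Moreover, $vv^*$ is a positive compact operator with range in $F\subseteq E_b$, which already places it in $\overline{b\K(H_A)b}$. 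The compact-containment witness for $F$ plays no role here.
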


In the case where $A$ has stable rank equal to one, it is known that it is enough to consider embeddings as the subequivalence condition for the Hilbert module picture of $Cu(A)$. We remark that isomorphism of Hilbert $A$-modules implies Cuntz equivalence.\cite{ara2009ktheoryoperatoralgebrasclassification} \\

\subsection{AH algebras}\hspace{1in}\\

\begin{defn}\cite{toms2006flatdimensiongrowthcalgebras}
     A $C^*$-algebra is homogeneous if it is of the form $p(C(X)\otimes \mathcal{K})p$ for some $X$ compact and projection $p\in P(C(X)\otimes \mathcal{K})$ of rank $n\in \mathbb{N}$.
\end{defn}

\begin{defn}\cite{JACELON20153304}
    An approximately homogeneous (AH) $C^*$-algebra is an inductive limit of direct sums of homogeneous $C^*$-algebras.
\end{defn}

\begin{defn}\cite{JACELON20153304}
    A map $\phi_j:C(X_j)\to M_{R_j}(C(X_{j+1}))$ is called diagonal if it has the form 

    \begin{align}
        \phi_j(f)=\text{diag}(f\circ\pi_1,\dots, f\circ\pi_{R_j}):=\begin{bmatrix}
            f\circ\pi_1    & 0      & \cdots & 0      \\
            0      & f\circ\pi_2    & \cdots & 0      \\
            \vdots & \vdots & \ddots & \vdots \\
             0      & 0      & \cdots & f\circ\pi_{R_j}
        \end{bmatrix}
    \end{align}
    where each of the continuous maps $\pi_j:X_{j+1}\to X_{j}$ are called the eigenvalue maps of $\phi_j$.
\end{defn}

We are concerned with the situation where $X_{j+1}$ is formed by taking copies of $X_j$ and the eigenvalue maps are the coordinate projections and point evaluations.

\begin{defn}
    Let $(n_j)_{\in \mathbb{N}}$ be an increasing sequence of natural numbers, such that $n_{j}|n_{j+1}$ for all $j$. A unital, diagonal $*$-homomorphism 

    \begin{align}
        \phi_j:M_{m_{j}}\otimes C(X^{n_j})\to M_{m_{j+1}}\otimes C(X^{n_{j+1}}) 
    \end{align}
    is called a Villadsen map of the first type if each eigenvalue map is either a point evaluation or a coordinate projection. An inductive system $\mathcal{A}=\varinjlim(\mathcal A_j,\phi_j)$ with $\mathcal{A}_{j}=M_{m_j}\otimes C(X^{n_j})$ and $\phi_j$ Villadsen maps of the first type, is called a type I Villadsen algebra.
\end{defn}

\begin{defn}
    Let $A=M_{n}\otimes C(X)$. Suppose that $\text{diag}(f_1,\dots,f_n)= f\in A$ is a diagonal positive element. We say that $f$ is strictly diagonal if $\coz(f_n)\subseteq \coz(f_{n-1})\subseteq\dots\subseteq \coz(f_1).$ We call an element $g\in A_+$ strictly diagonalizable if $g\sim g'$ where $g'$ is strictly diagonal and the equivalence relation is Cuntz equivalence.   
\end{defn}

\begin{lem}\label{le2.17}\cite{Robert2013cuntz} 
    Let $I$ and $J$ be two $\sigma$-unital closed two-sided ideals of a $C^*$-algebra $A$ and $H $ a countably generated Hilbert $A$-module. The following isomorphism holds: 
    \begin{align}
        HI\oplus HJ\cong H(I+J)\oplus H(I\cap J)
    \end{align}
\end{lem}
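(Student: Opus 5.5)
This is Robert's lemma, cited from \cite{Robert2013cuntz}. I would prove it by exhibiting an exact sequence of Hilbert $A$-modules and then using that it splits. The sequence is
\begin{align}
0\to H(I\cap J)\xrightarrow{\ \iota\ } HI\oplus HJ\xrightarrow{\ \sigma\ } H(I+J)\to 0,
\end{align}
where $\iota(x)=(x,-x)$ and $\sigma(y,z)=y+z$.

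First I record the basic module identities, all of which follow from Cohen factorization for the $\sigma$-unital ideals together with $HI=\{h\in H:\langle h,h\rangle\in I\}$:
\begin{itemize}
\item $HI$ and $HJ$ are closed submodules of $H$.
\item $HI\cap HJ=H(I\cap J)$: if $\langle h,h\rangle\in I\cap J$, then $h\in H(I\cap J)$.
\item $HI+HJ=H(I+J)$.
\end{itemize}
The third identity needs a short argument. The inclusion $\supseteq$ follows because $I+J$ is closed and every $h\in H(I+J)$ factors as $h=h'(e)$ with $e=i+j$. The sum $HI+HJ$ is closed because it is the image of the adjointable map $\sigma$, which has closed range since $H(I+J)$ is exhausted by it.

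With these identities, exactness is immediate. The map $\sigma$ is surjective, its kernel is $\{(x,-x):x\in HI\cap HJ\}$, and this is exactly $\iota(H(I\cap J))$. Both maps are adjointable bounded module maps between submodules of $H\oplus H$.

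The key step is producing an isomorphism of Hilbert modules, not just a splitting of modules. An exact sequence of Hilbert modules need not split orthogonally, so I would argue via the polar decomposition of $\sigma$. The adjoint of $\sigma$ is $\sigma^*(w)=(P_Iw,P_Jw)$, where $P_I$ is a projection onto $HI$. Such projections need not exist in general, and this is the main obstacle.

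I would avoid projections by using countable generation together with Kasparov's stabilization theorem, which gives $H\oplus H_A\cong H_A$. Then $HI\oplus H_AI\cong H_AI=\ell^2(I)$ and similarly for $J$ and the other ideals. After stabilizing, I would decompose concretely. Choose strictly positive elements $e_I\in I$ and $e_J\in J$. In $\ell^2(I+J)\oplus\ell^2(I\cap J)$ versus $\ell^2(I)\oplus\ell^2(J)$, the map $(y,z)\mapsto(y+z,\ \cdot)$ can be completed to a unitary using the rotation
\[
\begin{pmatrix} c & s\\ -s & c\end{pmatrix}
\]
built from functional calculus of $e_I$ and $e_J$. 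This is the standard trick in Robert's proof.

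Finally, I would destabilize. Because the isomorphism above comes from a continuous family of unitaries on $H\otimes M_2$, it restricts compatibly to the submodules $H\cdot(\,\cdot\,)$. The fallback, if this restriction is awkward, is to work in the Cuntz semigroup, where only Cuntz equivalence is needed later. There one compares via $\ll$ and uses that $HI$ corresponds to $\sup_n\langle h\, e_I^{1/n}\rangle$, reducing the identity to $\langle a\rangle+\langle b\rangle$-type computations for the ideal elements.

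I expect the main difficulty to be upgrading the algebraic splitting to a genuine Hilbert-module isomorphism.
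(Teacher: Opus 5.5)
The paper gives no proof of this lemma (it is only quoted from Robert), so your argument has to stand on its own, and its central step does not work. Stabilizing throws away exactly what you need. Kasparov's theorem gives $HI\oplus\ell^2(I)\cong\ell^2(I)$ for \emph{every} countably generated $H$, including $H=0$. So any identity you prove after adding $\ell^2$-summands is independent of $H$, and getting back to $HI\oplus HJ$ would require cancelling those summands, which fails for Hilbert modules. Your claim that the unitary ``restricts compatibly'' to $H\cdot(\,\cdot\,)$ has no basis. To be $A$-linear, the rotation must act on $\ell^2(A)$ by coordinatewise \emph{left} multiplication by (multipliers of) $A$. A right Hilbert module $H$ carries no such action, and the copy of $H$ in $\ell^2(A)$ (the range of a projection $P\in\mathcal{L}(\ell^2(A))$) is in general not invariant, since $P$ does not commute with these multipliers.

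The rotation is also not actually constructed. For non-commuting $e_I,e_J$, ``functional calculus of $e_I$ and $e_J$'' does not give a unitary; the formula $c=e_I(e_I^2+e_J^2)^{-1/2}$, $s=e_J(e_I^2+e_J^2)^{-1/2}$ only makes sense commutatively. What is needed is a commuting pair $c,s\geq 0$ with $c^2+s^2=1$, $sI\subseteq I\cap J$ and $cJ\subseteq I\cap J$. That is a lift of the central projection of $(I+J)/(I\cap J)\cong I/(I\cap J)\oplus J/(I\cap J)$, and producing it requires the noncommutative Tietze theorem; nothing in your sketch supplies it. The Cuntz-semigroup fallback would give only Cuntz equivalence, not the stated isomorphism, and is not an argument as written. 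Finally, contrary to your assertion, $\iota$ and $\sigma$ are not adjointable in general. For $H=A=C[0,1]$, $J=A$, $I=C_0([0,1))$, an adjoint of $\sigma$ would require a projection of $A$ onto $I$.

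The rotation idea does work if you run it on $H$ itself, using operators on $E:=H(I+J)$ instead of multipliers of $A$. Put $K=I\cap J$. In the Hilbert $A/K$-module $E/EK$, the images of $HI$ and $HJ$ are orthogonal, since $\langle y,z\rangle\in K$ for $y\in HI$, $z\in HJ$. They also span, by your identity $HI+HJ=E$, so the projection $\bar p$ onto the image of $HI$ is adjointable. Since $I+J$ is $\sigma$-unital, $E$ is countably generated and $\mathcal{K}(E)$ is $\sigma$-unital. Hence, by noncommutative Tietze, $\mathcal{L}(E)=M(\mathcal{K}(E))\to M(\mathcal{K}(E)/\mathcal{K}(EK))=\mathcal{L}(E/EK)$ is surjective. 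Lift $\bar p$ to a positive contraction $c_0$ and set $c=\sin(\frac{\pi}{2}c_0)$, $s=\cos(\frac{\pi}{2}c_0)$, so that $\bar c=\bar p$ and $\bar s=1-\bar p$. Then
\[
R=\begin{pmatrix} c & s\\ -s & c\end{pmatrix}
\]
is a unitary on $E\oplus E$. Use that $EK$ is the kernel of $E\to E/EK$, and that the preimage of the image of $HI$ is $HI+EK=HI$ (likewise for $HJ$). One then checks $R(HI\oplus HJ)\subseteq E\oplus EK$ and $R^*(E\oplus EK)\subseteq HI\oplus HJ$. So $R$ restricts to a unitary isomorphism $HI\oplus HJ\cong H(I+J)\oplus H(I\cap J)$, with no stabilization. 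In the only instance the paper uses (Lemma~\ref{le2.18}), replace $e_I,e_J$ in your commutative formula by scalar functions $g_i\in C(X)$ with $\coz(g_i)=U_i$. This gives central multipliers of $I+J$ acting on $H(I+J)$ from the right, so there the lifting step is unnecessary.
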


\begin{lem}\label{le2.18}
    Let $a(x)\in M_{2}(C(X))^+=A$ be a diagonal element, where $X$ is compact and Hausdorff. It follows that $a(x)$ is strictly diagonalizable. 
\end{lem}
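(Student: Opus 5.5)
We prove Lemma \ref{le2.18} by using the Hilbert module picture of $\Cu(A)$ together with Lemma \ref{le2.17}. Write $a=\text{diag}(a_1,a_2)$ with $a_1,a_2\in C(X)_+$, and set $U_i=\coz(a_i)$. The candidate strictly diagonal element is
\begin{align}
a'=\text{diag}(a_1+a_2,\ a_1a_2).
\end{align}
Since $\coz(a_1+a_2)=U_1\cup U_2$ and $\coz(a_1a_2)=U_1\cap U_2$, the nesting $\coz(a_1a_2)\subseteq\coz(a_1+a_2)$ holds, so $a'$ is strictly diagonal. It remains to show that $a\sim a'$.

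First we identify the Hilbert modules. Let $I=C_0(U_1)$ and $J=C_0(U_2)$. These are closed ideals of $C(X)$, hence of $B=C(X)$, and they are $\sigma$-unital because they are generated by the single elements $a_1$ and $a_2$. Work over $C(X)$ and use the identification of $M_2(C(X))$ with compacts on $C(X)^2$, which is compatible with the Cuntz semigroup. The Hilbert module associated to a diagonal element $\text{diag}(b_1,b_2)$ is $\overline{b_1C(X)}\oplus\overline{b_2C(X)}$. For $b\in C(X)_+$ we have $\overline{bC(X)}=C_0(\coz(b))$. Taking $H=C(X)$, this gives
\begin{align}
E_a\cong HI\oplus HJ, \qquad E_{a'}\cong H(I+J)\oplus H(I\cap J).
\end{align}

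Next we identify the ideals. Here $I+J=C_0(U_1\cup U_2)$: the sum of two closed ideals is closed, and its spectrum is the union of the open sets. Also $I\cap J=C_0(U_1\cap U_2)=IJ$. Lemma \ref{le2.17} now gives $E_a\cong E_{a'}$. Isomorphic modules are Cuntz equivalent, so $\langle a\rangle=\langle a'\rangle$ in $\Cu(C(X))$.

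Finally we transfer the statement back to $M_2(C(X))$. Cuntz subequivalence in $M_2(A)$ agrees with subequivalence in the stabilization, because $M_2(A)$ is a hereditary (corner) subalgebra of $A\otimes\K$. Hence $a\sim a'$ in $M_2(C(X))$, and $a$ is strictly diagonalizable.

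The main obstacle is the careful identification of $\overline{a(\ell^2\otimes A)}$ with $C_0(U_1)\oplus C_0(U_2)$ as submodules of the standard module, and the verification that $I+J$ equals $C_0(U_1\cup U_2)$. The latter follows from a partition of unity argument on $U_1\cup U_2$: split $f\in C_0(U_1\cup U_2)$ using the functions $a_1/(a_1+a_2)$ and $a_2/(a_1+a_2)$, suitably truncated near the boundary.

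If one prefers to avoid module theory, there is a direct alternative. One can exhibit $a'\precsim a$ and $a\precsim a'$ by an explicit rotation. Take the matrix with columns $(\sqrt{a_1},\sqrt{a_2})$ and argue via $\text{diag}(a_1,a_2)\sim xx^*$ and $x^*x$, where $x=\begin{bmatrix}\sqrt{a_1}&0\\ \sqrt{a_2}&0\end{bmatrix}$, combined with $\varepsilon$-cutdowns. The module route is cleaner, however, and that is the one we would take.
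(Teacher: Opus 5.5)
Your proof is correct and follows the paper's route. In both arguments you identify the Hilbert modules of $\mathrm{diag}(a_1,a_2)$ and of a diagonal element with cozero sets $U_1\cup U_2$ and $U_1\cap U_2$, then match them via Lemma~\ref{le2.17}; the only difference is bookkeeping, since you apply the lemma over $C(X)$ with $H=C(X)$ and ideals $C_0(U_i)$ and pull back through the corner $M_2(C(X))\subseteq C(X)\otimes\K$, while the paper works over $M_2(C(X))$ with $H=e_{11}A$, ideals $M_2(C_0(U_i))$, and $e_{12}$ moving the $e_{22}$-summand into the $e_{11}$-corner. Your explicit $a'=\mathrm{diag}(a_1+a_2,a_1a_2)$ and your checks of $\sigma$-unitality and $I+J=C_0(U_1\cup U_2)$ supply details the paper omits, but drop the closing ``direct alternative'': $xx^*$ has pointwise rank at most one, so it cannot be Cuntz equivalent to $\mathrm{diag}(a_1,a_2)$, which has rank two on $U_1\cap U_2$.
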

\begin{proof}
Suppose that $a(x)=\begin{bmatrix}
    f_1(x) & 0 \\ 0 & f_2(x)
\end{bmatrix}$ where $f_i: X\to \mathbb{R}_{\geq 0}$ and  $\text{Coz}(f_i)=U_i\subset X$.\\

The Hilbert $A$-module corresponding to $a(x)$ is $\overline{aA}\cong e_{11}M_2(C_0(U_1))\oplus e_{22}M_2(C_0(U_2))$. Noticing that $e_{11}J\cong e_{22}J$ via left multiplication of $e_{12}$. Thus, $\overline{aA}\cong e_{11}M_2(C_0(U_1))\oplus e_{11}M_2(C_0(U_2))=e_{11}AM_{2}(C_0(U_1))\oplus e_{11}AM_{2}(C_0(U_2))$ Invoking \ref{le2.17} with $H=e_{11}A$ and $I=M_2(C_0(U_1))$ and $J=M_2(C_0(U_2))$ we obtain an isomorphism of Hilbert $A$-modules
\begin{center}
    $\overline{aA}\cong \Big[e_{11}M_2(C_0(U_{1}\cup U_2))\oplus e_{11}M_2(C_0(U_1 \cap U_2))\Big]$
\end{center}

Let $g_1,g_2\in C(X)$ be functions with cozero sets  $U_1\cup U_2$ and $U_1 \cap U_2 $, respectively. Put $b(x):=\text{diag}(g_1,g_2)$. It then follows that the Hilbert $A$-module $\overline{bA}\cong \Big[e_{11}M_2(C_0(U_{1}\cup U_2))\oplus e_{11}M_2(C_0(U_1 \cap U_2))\Big]$. Thus $a(x)\sim_{cu} b(x)$ as desired.

\end{proof}

\begin{lem}\label{le2.19}
    Let $a(x)\in M_{n}(C(X))^+$ be a diagonal element, where $X$ is compact and Hausdorff. It follows that $a(x)$ is strictly diagonalizable. 
\end{lem}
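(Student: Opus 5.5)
We argue by induction on $n$, generalizing the proof of Lemma \ref{le2.18}, which is the case $n=2$. Write $a=\text{diag}(f_1,\dots,f_n)$ with $U_i=\coz(f_i)$. As in that proof, the Hilbert module of $a$ decomposes as
\[
\overline{aA}\cong \bigoplus_{i=1}^n e_{ii}M_n(C_0(U_i)).
\]
Left multiplication by the partial isometry $e_{1i}$ gives $e_{ii}J\cong e_{11}J$ for any ideal $J$. Hence
\[
\overline{aA}\cong\bigoplus_{i=1}^n e_{11}A\,I_i, \qquad I_i=M_n(C_0(U_i)),
\]
so the class of $a$ depends only on the family of open sets $(U_1,\dots,U_n)$. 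Conversely, any diagonal element whose coordinate cozero sets are $V_1,\dots,V_n$ has module $\bigoplus_i e_{11}A\,M_n(C_0(V_i))$. Since $X$ is compact Hausdorff and we work with cozero sets, each $V_i$ is the cozero set of some $g_i\in C(X)_+$. It therefore suffices to replace the family $(U_i)$ by a decreasing chain $V_1\supseteq\dots\supseteq V_n$ while keeping the direct sum of the modules $e_{11}A\,M_n(C_0(\cdot))$ unchanged up to isomorphism.

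The replacement is a sorting procedure built from Lemma \ref{le2.17}. Lemma \ref{le2.17}, applied with $H=e_{11}A$, replaces a pair $(U,V)$ by $(U\cup V,\,U\cap V)$. Here we use $M_n(C_0(U))+M_n(C_0(V))=M_n(C_0(U\cup V))$ and $M_n(C_0(U))\cap M_n(C_0(V))=M_n(C_0(U\cap V))$; these ideals are $\sigma$-unital since cozero sets are $\sigma$-compact. Unions and intersections of cozero sets are again cozero sets ($f+g$ and $fg$), so the new sets are still realized by functions.

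Concretely, we first apply the lemma successively to the pairs $(U_1,U_2)$, then $(U_1\cup U_2,\,U_3)$, and so on. After $n-1$ steps the first slot holds $U_1\cup\dots\cup U_n$. The remaining slots hold the sets $W_k=(U_1\cup\dots\cup U_{k})\cap U_{k+1}$ for $k=1,\dots,n-1$, each of which is contained in the union. We then apply the induction hypothesis to the $n-1$ sets $(W_1,\dots,W_{n-1})$, using the module identity above in $M_{n-1}$ form, or directly as a sum of $n-1$ copies of $e_{11}A\cdot(\text{ideal})$. This yields a decreasing chain $V_2\supseteq\dots\supseteq V_n$.

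Each $V_j$ is built from the $W_k$ by unions and intersections, so $V_2\subseteq \bigcup_k W_k\subseteq V_1$. The full family $V_1\supseteq V_2\supseteq\dots\supseteq V_n$ is therefore a chain. Let $g_i$ be a function with cozero set $V_i$ and set $b=\text{diag}(g_1,\dots,g_n)$, which is strictly diagonal. Then $\overline{aA}\cong\overline{bA}$, so $a\sim b$, since isomorphism of Hilbert modules implies Cuntz equivalence.

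The main obstacle is this bookkeeping: making sure the induction applies to sums of modules of the form $e_{11}A\,I$ over $A=M_n(C(X))$ rather than to an element of $M_{n-1}(C(X))$. To handle it cleanly, we phrase the inductive statement purely in terms of families of open cozero sets: any family of $k\le n$ cozero sets yields a module $\bigoplus e_{11}A\,M_n(C_0(U_i))$ isomorphic to one coming from a decreasing chain. Phrased this way, the induction stays inside the fixed algebra $A$.
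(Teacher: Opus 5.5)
Your proof is correct and follows essentially the same route as the paper: a bubble sort of the cozero sets using the swap $(U,V)\mapsto(U\cup V,\,U\cap V)$, which comes from Lemma~\ref{le2.17} with $H=e_{11}A$. The differences are cosmetic: you work directly with the Hilbert module summands inside the fixed algebra $M_n(C(X))$ instead of applying Lemma~\ref{le2.18} to $2\times 2$ diagonal sub-blocks, and you accumulate the union in the first slot, so the paper's final order-reversing permutation is not needed.
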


\begin{proof}
    Let $a(x)=\text{diag}(f_1,\dots,f_n)$ where $\coz(f_i)=U_i\subset X$ for each $i=1,\dots ,n.$ We proceed inductively in the following way, \begin{align}
        a(x)=(f_1 \oplus f_2)\oplus \text{diag}(f_3,\dots,f_n)
    \end{align} From \ref{le2.18} we have that $f_1\oplus f_2\sim g_1 \oplus g_2$ with
    \begin{enumerate}
        \item  $\coz(g_1)=\coz(f_1)\cap \coz(f_2)$
        \item $\coz(g_2)=\coz(f_1)\cup \coz(f_2)$
    \end{enumerate}

Thus, $a(x)\sim \text{diag}(g_1,g_2,f_3,\dots,f_n)$. Now we write $\text{diag}(g_1,g_2,f_3,\dots,f_n)$ as $(g_1)\oplus (g_2\oplus f_3)\oplus \text{diag}(f_4,\dots,f_n)$. Apply \ref{le2.18} again to get $(g_2\oplus f_3)\sim (g_2'\oplus g_3)$ such that 
\begin{enumerate}
    \item  $\coz(g_2')=\coz(g_2)\cap \coz(f_3)$
        \item \begin{align}\coz(g_3)=&\coz(g_2)\cup \coz(f_3)\\ =& \coz(f_1)\cup \coz(f_2)\cup \coz(f_3)\end{align}
\end{enumerate}

Continue this process until we obtain an element $\text{diag}(g_1,g_2',\dots g_{n-1}',g_n)\sim a(x)$ with the property that $\coz(g_n)=\bigcup_{i=1}^{n} \coz(f_i)$.\\

Now we iterate again over the submatrix block $\text{diag}(g_1,\dots,g_{n-1}')$ to obtain an element $\text{diag}(g_1',g_2'',\dots,g_{n-2}'',g_{n-1}')\sim \text{diag}(g_1,\dots,g_{n-1}')$ this gives a new element $\text{diag}(g_1',g_2'',\dots,g_{n-2}'',g_{n-1}',g_{n})$ with the property that \begin{enumerate}
    \item $\coz(g_{n-1}')\subseteq\coz(g_n)$, and
$\coz(g_{n-1}')$ contains the cozero sets of all preceding
diagonal entries.
    \item $\coz(g_n)=\bigcup_{i=1}^{n} \coz(f_i)$
\end{enumerate}

We continue iterating this process until at the last stage, we have an element whose cozero sets are nested in increasing order. Abusing notation along the way we relabel and get that $a(x)\sim\text{diag}(g_1,\dots,g_n)$ such that $\coz(g_1)\subset \coz(g_2)\subset \dots \subset \coz(g_n)$. To get the desired element, apply a permutation unitary that reverses the order along the diagonal. 
\end{proof}

\subsection{Properties of the strictly diagonal elements. }

\noindent

If $a=\text{diag}(a_1,\dots,a_n)$ and $b=\text{diag}(b_1,\dots,b_n)$ are strictly diagonal elements, then it is easy to see that $a+b$ is strictly diagonal. The same is true of the product $ab$, $a^*$, and $\lambda a$ for all $\lambda\geq 0$ thus the set of strictly diagonal elements forms a $*$-closed, convex cone of $A=M_n\otimes C(X)$.

\begin{lem}\label{le2.20}
    Let $A=M_{n}\otimes C(X)$, where $X$ is a compact metric space. Suppose that $a,b\in A_+$ are strictly diagonal, then $a\precsim b$ if and only if $\text{rank} (a(x))\leq \text{rank}( b(x))$ for all $x\in X$.
\end{lem}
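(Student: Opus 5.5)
The plan is to reduce both directions to statements about cozero sets. Write $a=\text{diag}(a_1,\dots,a_n)$ and $b=\text{diag}(b_1,\dots,b_n)$, and set $U_i=\coz(a_i)$, $V_i=\coz(b_i)$. Strict diagonality gives nested sets $U_n\subseteq\dots\subseteq U_1$ and $V_n\subseteq\dots\subseteq V_1$. Consequently $\text{rank}(a(x))=\#\{i: x\in U_i\}$, and $x\in U_k$ exactly when $\text{rank}(a(x))\ge k$; the same holds for $b$ with the $V_k$. The pointwise rank condition is therefore equivalent to the inclusions $U_k\subseteq V_k$ for every $k=1,\dots,n$.

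For the forward direction, I would use lower semicontinuity of rank under Cuntz subequivalence. If $a=\lim_i x_ibx_i^*$, then evaluating at a point $x$ gives $a(x)=\lim_i x_i(x)b(x)x_i(x)^*$ in $M_n(\mathbb{C})$. Each matrix $x_i(x)b(x)x_i(x)^*$ has rank at most $\text{rank}(b(x))$. Rank is lower semicontinuous on matrices, so $\text{rank}(a(x))\le\text{rank}(b(x))$. Alternatively, one can apply the dimension function $d_{\tau_x}$ from Proposition \ref{gardella-dim-fns}(iv) together with the Remark on $\hat c(\tau_x)$.

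For the converse, assume $U_k\subseteq V_k$ for all $k$. It suffices to show $a_k\precsim b_k$ in $C(X)$ for each $k$, because Cuntz subequivalence is compatible with direct sums: from $x^{(k)}_i b_k x^{(k)*}_i\to a_k$ one forms the diagonal matrices $\text{diag}(x^{(1)}_i,\dots,x^{(n)}_i)$. In the commutative algebra $C(X)$, $a_k\precsim b_k$ holds if and only if $\coz(a_k)\subseteq\coz(b_k)$. To prove the needed direction concretely, fix $\eps>0$. The set $K=\{a_k\ge\eps\}$ is compact and contained in $V_k$, so $\delta:=\min_K b_k>0$. Put $x_\eps=(a_k-\eps)_+^{1/2}\,h(b_k)$, where $h$ is continuous, vanishes at $0$, and satisfies $h(t)=t^{-1/2}$ for $t\ge\delta$. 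Then $x_\eps b_kx_\eps^*=(a_k-\eps)_+\cdot h(b_k)^2b_k=(a_k-\eps)_+$, since $h(b_k)^2 b_k=1$ on $K$, which contains the support of $(a_k-\eps)_+$. Letting $\eps\to0$ gives $a_k\precsim b_k$.

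The main obstacle is justifying the translation from pointwise rank to the cozero inclusions $U_k\subseteq V_k$. This step is precisely where strict diagonality enters. Without nesting, a rank inequality would not force any entrywise inclusion, and one would need the module argument of Lemma \ref{le2.19}. Here, however, the nesting makes "$x\in U_k$" equivalent to "$\text{rank}(a(x))\ge k$", and the inclusion follows directly from the rank inequality.
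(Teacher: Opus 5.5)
Your proposal is correct and follows the same route as the paper. Both arguments use pointwise evaluation for the forward direction; for the converse, both derive the coordinatewise inclusions $\coz(a_k)\subseteq\coz(b_k)$ and then assemble the diagonal entries into a witness for $a\precsim b$. You also supply two details the paper leaves implicit: why the nesting turns the rank inequality into $U_k\subseteq V_k$, and an explicit witness for $a_k\precsim b_k$ in $C(X)$.
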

\begin{proof}
    Put $a=\text{diag}(a_1,\dots,a_n)$ and $b=\text{diag}(b_1,\dots,b_n)$. If $a\precsim b$, then it follows directly that $a(x)\precsim b(x) ,\forall x\in X$ which implies that $  \text{rank} (a(x))\leq \text{rank} (b(x))$ for all $x\in X$. Now assume that $\text{rank} (a(x))\leq \text{rank} (b(x))$ for all $x\in X$, then it follows that $\coz (a_i)\subset \coz(b_i)$ for each $i=1,\dots,n$. Thus, one has Cuntz subequivalence in each coordinate along the diagonal, which implies that $a\precsim b$.
\end{proof}

The important implication of \ref{le2.20} is that taken in conjunction with \ref{le2.18}, any positive, diagonal element can be represented by a strictly diagonal one, which then in turn implies that for any two diagonal elements $a$ and $b$ in $M_n\otimes C(X)$, one can find representatives whose Cuntz comparison can be carried out via their pointwise ranks. Thus, the Cuntz class recalls the underlying topology of the $C^*$-algebra.

Let $A=\varinjlim (A_i,\phi_i)$ be a type I Villadsen algebra. From \ref{le2.19} we see that the set of diagonal elements in each building block are Cuntz equivalent to some strictly diagonal element, and hence the diagonal maps do not disturb the Cuntz classes of strictly diagonal elements. \\

\begin{lem}\label{lem:affine_function_rep}\cite{BrownPereraToms2008} Let $g\in \mathrm{Aff}(T(A))$ satisfying $0\leq g<1$ where $A=M_{n}\otimes C(X)$ with $X$ a compact metric space and $n> 1$. Then there exists a strictly diagonal $b\in A_{+}$ such that $0\leq g - \hat b\leq \frac{1}{n}$. 
\end{lem}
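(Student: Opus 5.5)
Since $T(M_n\otimes C(X))$ is affinely homeomorphic to $\Prob(X)$, with extreme points the point evaluations $\tau_x$ composed with the normalized trace, I would first replace $g$ by its restriction $\tilde g(x):=g(\tau_x)$. This is a continuous function $X\to[0,1)$. Because $g$ is continuous and affine, it is determined by $\tilde g$ via $g(\mu)=\int_X \tilde g\,d\mu$. On the other side, the Remark above gives $\hat b(\tau_x)=\rk(b(x))/n$ for any $b\in A_+$. Moreover $\hat b(\mu)=\int_X \rk(b(x))/n\,d\mu(x)$, because $d_\tau$ is computed as a monotone limit of $\tau(b^{1/k})$, and monotone convergence applies inside the integral. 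Hence it suffices to build a strictly diagonal $b$ with
\[
0\le \tilde g(x)-\frac{\rk(b(x))}{n}\le \frac1n \quad\text{for every } x\in X,
\]
and then integrate this inequality against $\mu$.

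To build $b$, I would use level sets. For $i=1,\dots,n$, put
\[
U_i:=\{x\in X:\ \tilde g(x)>i/n\}.
\]
Each $U_i$ is open, and $U_n\subseteq U_{n-1}\subseteq\dots\subseteq U_1$. Since $X$ is metric, every open set is a cozero set, so I can choose $b_i(x):=\max(\tilde g(x)-i/n,0)$, which satisfies $\coz(b_i)=U_i$. Then set $b:=\mathrm{diag}(b_1,\dots,b_n)$. This is positive and strictly diagonal by construction. Note that $U_n=\emptyset$ because $\tilde g<1$, so $b_n=0$.

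The key count is that $\rk(b(x))=\#\{i\ge 1: \tilde g(x)>i/n\}$. If $\tilde g(x)\in (k/n,(k+1)/n]$, the rank is $k$, and then $\tilde g(x)-k/n\in(0,1/n]$. If $\tilde g(x)=0$, the rank is $0$ and the difference is $0$. In every case the pointwise difference lies in $[0,1/n]$. This also explains the strict inequality on the left of each level: it prevents overshooting.

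The only mildly delicate point is the passage from extreme traces to all traces. Here I would invoke the lower semicontinuity and affineness of $\hat b$ together with the integral formula for $d_\tau$ at $\tau=\int\tau_x\,d\mu$. This is where the main care is needed; the combinatorics above is routine.
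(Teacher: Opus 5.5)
Your proof is correct and is essentially the paper's argument: the same level sets $\{\tilde g>i/n\}$, the same strictly diagonal element $\mathrm{diag}(b_1,\dots,b_{n-1},0)$, and the same pointwise rank count at the extreme traces $\tau_x$. The only differences are minor. You write down explicit entries $b_i=\max(\tilde g-i/n,0)$ rather than invoking metrizability to get functions with prescribed cozero sets. You also pass to arbitrary traces by integrating the pointwise inequality using $\hat b(\mu)=\int_X \rk(b(x))/n\,d\mu$ and $g(\mu)=\int_X\tilde g\,d\mu$; monotone convergence is legitimate here because $b$ is a contraction. The paper instead uses Bauer's minimum principle and lower semicontinuity for that step.
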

\begin{proof}
     First, identify $T(A)\cong \text{Prob}(X)$. Set $q(x)=g(\tau_x)$ with $\tau_x:=tr_n\circ ev_x$. For each $0\leq i \leq n-1$ define an open set 
$$
A_{i}:=\{x\in X:q(x)> i/n\}.
$$
$\{x\in X:q(x)> \frac{i+1}{n}\}\subset \{x\in X:q(x)> \frac{i}{n}\}$ for each $i$. Since $X$ is a metric space, we can find a continuous map $f_{i}:X\to [0,1]$ such that $\text{Coz}(f_{i})=A_{i}$. \\

Put 
$$
B_{i}:=\{x\in X : \frac{i}{n}< q(x)\leq \frac{i+1}{n}\}=A_{i}\setminus (\cup_{j>i}A_{j}),
$$ for $0\leq i\leq n-2$ and $B_{n-1}:=A_{n-1}=\{x:\frac{n-1}{n}<q(x)<1\}$.
$$
a:=\bigoplus_{i=1}^{n-1}f_{i}e_{ii}=\mathrm{diag}(f_{1},\dots,f_{n-1},0)
$$
where $e_{ii}$ is the $i$-th standard matrix unit. 

$$
\hat a (\tau_x):= \frac{|1\leq j\leq n-1:x\in A_{j}|}{n}.
$$  
If $x\in (X\setminus A_{0})\cup B_{0}$, then $\hat a(x)=0$ and $0\leq g-\hat a \leq \frac{1}{n}$ for all such $x$, this follows from the definition of $B_{0}$. If $j\geq 1$ and $x\in B_{j}$, then $\hat a(\tau_x)=j/n$ and $\frac{j}{n}<q(x)\leq \frac{j+1}{n}$ and so $0\leq (g-\hat a)\leq  \frac{1}{n}$ for all such $x$. Since $\hat a$ is lower-semicontinuous, so is $\hat a-g$. A lower semi-continuous affine function on a Bauer simplex achieves its minimum on the extreme boundary, and the minimum is at least $\frac{-1}{n}$ by construction, thus $\hat a-g\geq \frac{-1}{n}$. By affineness, $\hat a-g\leq 0$ on any convex combination of extreme traces. By lower-semicontinuity $\hat a-g\leq 0$ on all of $T(A)$.

\end{proof}

\begin{cor}\label{cor:bounded_fn_rep}
   Let $A=M_{n}\otimes C(X)$, where $X$ is a compact metric space and let $g\in \text{Aff}(T(A))_+$ be bounded. Choose $k\in \mathbb{N}$ satisfying $||g||< k$. Then there exists a strictly diagonal element $b\in (M_{k}\otimes A)_+$ such that $0\leq g-\hat b\leq \frac{1}{n}$ on $T(A)$.
\end{cor}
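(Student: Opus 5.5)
The plan is to reduce Corollary \ref{cor:bounded_fn_rep} to Lemma \ref{lem:affine_function_rep} by cutting $g$ into $k$ affine pieces, each with values in $[0,1)$. One piece is produced in each of $k$ diagonal blocks, and the pieces are reassembled in $M_k\otimes A\cong M_{kn}\otimes C(X)$.

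\textbf{Splitting $g$.} Since $g$ is continuous and affine with $0\le g\le \|g\|<k$, set
\[
g_0:=\tfrac{1}{k}\,g .
\]
Then $g_0\in \mathrm{Aff}(T(A))$ and $0\le g_0<1$, with $g=k g_0=\sum_{j=1}^{k} g_0$. For $k=1$ the statement is exactly Lemma \ref{lem:affine_function_rep}, so assume $k>1$.

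\textbf{Applying the lemma to each piece.} Lemma \ref{lem:affine_function_rep} gives a strictly diagonal $b_0\in A_+$ with $0\le g_0-\hat b_0\le \frac1n$. Summing $k$ copies would only give the error $k\cdot\frac1n$, which is too coarse. To avoid this, I would not apply the lemma to $g_0$ on $A$ but directly to $g_0$ on the algebra
\[
B:=M_k\otimes A\cong M_{kn}\otimes C(X).
\]
The trace simplex is unchanged, $T(B)\cong \mathrm{Prob}(X)$. The normalized dimension function on $B$ at $\tau_x$ is $\mathrm{rank}(c(x))/(kn)$, while the amplified $\hat c$ (built from $\mathrm{Tr}_k\otimes\tau$) is $k$ times it. Lemma \ref{lem:affine_function_rep} applied in $B$ with matrix size $kn$ gives a strictly diagonal $b\in B_+$ with
\[
0\le g_0-\tfrac1k\hat b\le \tfrac{1}{kn}.
\]
Multiplying by $k$ yields $0\le g-\hat b\le \frac1n$.

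\textbf{Strict diagonality.} I would check that the $b$ produced has decreasing nested cozero sets along the diagonal of $M_{kn}(C(X))$. It does, since it is $\mathrm{diag}(f_1,\dots,f_{kn-1},0)$ with $\coz(f_i)=\{q>i/(kn)\}$ decreasing in $i$, so $b$ is strictly diagonal in $M_k\otimes A$. The final step, passing from extreme traces to all of $T(A)$, is inherited verbatim from the lemma's argument.

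\textbf{Expected obstacle.} The main thing to get right is the normalization bookkeeping. One must confirm that $\hat b(\tau_x)=\mathrm{rank}(b(x))/n$ when $\tau$ is normalized on $A$ and extended by the unnormalized $\mathrm{Tr}_k$. This is what makes the rescaling by $k$ exact and keeps the error at $\frac1n$ rather than $\frac kn$.
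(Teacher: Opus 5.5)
Your proposal is correct and is essentially the paper's proof. You rescale to $g/k$, apply Lemma~\ref{lem:affine_function_rep} in $M_k\otimes A\cong M_{kn}\otimes C(X)$ to get error $\frac{1}{kn}$ against the normalized traces $\mathrm{tr}_k\otimes\tau$, and multiply by $k$ to pass to the amplified $\hat b$ (built from $\mathrm{Tr}_k\otimes\tau$) with error $\frac{1}{n}$; the opening splitting into $k$ copies is unnecessary, as you observe, and can be dropped.
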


\begin{proof}
Identify $T(M_{k}(A))\cong T(A)$ via $\tau\mapsto tr_{k}\otimes \tau$. Define $h(tr_{k}\otimes \tau)=\frac{1}{k}g(\tau)$, then $0\leq h <1$. Apply lemma \ref{lem:affine_function_rep} to $M_{k}(A)\cong M_{kn}(C(X))$. This provides the existence of $b\in M_{k}(A)_+$ such that 
\begin{align}
    0\leq& h(tr_{k}\otimes \tau)-d_{tr_k\otimes\tau}(b)\leq \frac{1}{kn}\\
    0\leq& \frac{1}{k}g(\tau)-d_{tr_k\otimes\tau}(b)\leq \frac{1}{kn}\\
    0\leq& g(\tau)-kd_{tr_k\otimes\tau}(b)\leq \frac{1}{n}\\
    0\leq& g(\tau)-d_{\tau^{(k)}}(b)\leq \frac{1}{n}
\end{align}

\end{proof}

\begin{defn}
\label{ass:positive-density}
We say that the inductive system has positive finite-stage density if the
following holds. For every strictly positive function
\[
H\in \Aff(T(A)),
\qquad
H(\tau)>0\quad \forall \tau\in T(A),
\]
for every \(\eps>0\), and for every initial stage \(N\in\mathbb N\), there
exist \(k\geq N\) and a function
\[
u_k\in \Aff(T(A_k))
\]
such that
\[
u_k(\sigma)>0
\qquad \forall \sigma\in T(A_k),
\]
and
\[
\left\|
\phi_{k,\infty}^{\bullet}(u_k)-H
\right\|<\eps.
\]
\end{defn}

\begin{rmk}
This is stronger than ordinary uniform density. The extra point is the
finite-stage positivity
\[
u_k>0\quad\text{on }T(A_k),
\]
not merely positivity after pullback to \(T(A)\). This is exactly the input
which lets us build a finite-stage gap rather than only a gap on the limit
trace space.
\end{rmk}

We now record a concrete condition under which
Assumption~\ref{ass:positive-density} holds for Villadsen systems of the first
type. This is the point at which the special form of the eigenvalue maps is
used.

Let
\[
A_i=M_{r_i}\otimes C(X_i)
\]
be a Villadsen building block. We identify traces on \(A_i\) with Borel
probability measures on \(X_i\), via
\[
\tau_\mu(a)=\int_{X_i}\operatorname{tr}_{r_i}(a(x))\,d\mu(x),
\qquad \mu\in \Prob(X_i),
\]
where \(\operatorname{tr}_{r_i}\) denotes the normalized matrix trace. Under
this identification,
\[
T(A_i)\cong \Prob(X_i).
\]

Suppose that the connecting map
\[
\phi_i:A_i\longrightarrow A_{i+1}
\]
has diagonal form
\[
\phi_i(f)=
\operatorname{diag}
\bigl(
 f\circ \lambda_{i,1},\ldots,f\circ \lambda_{i,N_i}
\bigr),
\]
where each eigenvalue map
\[
\lambda_{i,t}:X_{i+1}\to X_i
\]
is either a coordinate projection or a point evaluation. Let \(c_i\) denote the
number of coordinate projections among the eigenvalue maps at stage \(i\), and
let \(p_i\) denote the number of point evaluations. Thus
\[
N_i=c_i+p_i.
\]
Set
\[
\alpha_i:=\frac{c_i}{c_i+p_i}.
\]

\begin{defn}
\label{defn:negligible-point-tails}
For a type I Villadsen algebra we say that it has negligible point-evaluation tails if \(c_i>0\) for all sufficiently large \(i\), and 
\[
\lim_{k\to\infty}\prod_{\ell=k}^{\infty}\alpha_\ell=1.
\]
Equivalently,
\[
\lim_{k\to\infty}
\left(
1-\prod_{\ell=k}^{\infty}\frac{c_\ell}{c_\ell+p_\ell}
\right)=0.
\]
This holds, for instance, if
\[
\sum_{\ell=1}^{\infty}\frac{p_\ell}{c_\ell+p_\ell}<\infty.
\]
\end{defn}

\begin{lem}
\label{lem:tail-coordinate-traces}
Assume that ~\ref{defn:negligible-point-tails} holds. Fix a stage \(k\) and a
point \(x\in X_k\). Put
\[
\theta_k:=\prod_{\ell=k}^{\infty}\alpha_\ell.
\]
Then there exists a trace \(\tau_x\in T(A)\) and a probability measure
\(\rho_x\in \Prob(X_k)\) such that
\[
\phi_{k,\infty}^{\#}(\tau_x)
=
\theta_k\delta_x+(1-\theta_k)\rho_x.
\]
\end{lem}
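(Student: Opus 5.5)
We build a compatible sequence of traces $\sigma_\ell\in T(A_\ell)$, $\ell\ge k$, satisfying $\phi_\ell^{\#}(\sigma_{\ell+1})=\sigma_\ell$. Such a sequence defines a tracial state $\tau_x$ on the dense union $\bigcup_\ell \phi_{\ell,\infty}(A_\ell)$, which extends by continuity to $A$. We then track the weight that $\sigma_k$ places on the Dirac mass at $x$.

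\textbf{Lifting one stage.} Suppose $\sigma_\ell$ corresponds to a measure $\mu_\ell\in\Prob(X_\ell)$. For $\phi_\ell(f)=\operatorname{diag}(f\circ\lambda_{\ell,1},\dots,f\circ\lambda_{\ell,N_\ell})$, and assuming, as in the unital diagonal setting, that the blocks have equal size so that $\operatorname{tr}_{r_{\ell+1}}(\phi_\ell(f)(y))=\frac1{N_\ell}\sum_t\operatorname{tr}_{r_\ell}(f(\lambda_{\ell,t}(y)))$, we get
\[
\phi_\ell^{\#}(\nu)=\frac{1}{N_\ell}\sum_{t}(\lambda_{\ell,t})_*\nu .
\]
The point-evaluation eigenvalue maps push any $\nu$ to fixed Dirac masses $\delta_{z_t}$. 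Choose a measure $\nu$ on $X_{\ell+1}=X_\ell^{\,n_{\ell+1}/n_\ell}$ whose pushforward under every coordinate projection equals $\mu_\ell$. For instance, take the product measure $\nu=\mu_\ell^{\otimes}$, or a diagonal measure if that is more convenient; either is valid since every marginal is $\mu_\ell$. Then
\[
\phi_\ell^{\#}(\nu)=\alpha_\ell\mu_\ell+(1-\alpha_\ell)\beta_\ell,
\]
where $\beta_\ell$ is the average of the point masses. So exact compatibility fails at a single stage. We therefore use the approximation below.

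\textbf{Weak-$*$ limit.} For each $m>k$, start with $\delta$-type data at stage $m$ and push it down. Define $\nu_m\in\Prob(X_m)$ recursively as the "diagonal lift" of $\delta_x$ through stages $k,\dots,m$: pick $y_m\in X_m$ all of whose $X_k$-coordinates arising through composites of coordinate projections equal $x$. Such a point exists because $X_m$ is a power of $X_k$ and we may set every coordinate equal to $x$. Set $\nu_m=\delta_{y_m}$. Then $\phi_{k,m}^{\#}(\delta_{y_m})$ is the average over all composite eigenvalue paths $X_m\to X_k$. Exactly the paths consisting only of coordinate projections, a fraction $\prod_{\ell=k}^{m-1}\alpha_\ell$ of them, give $\delta_x$; the others give arbitrary point masses. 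Hence
\[
\phi_{k,m}^{\#}(\delta_{y_m})=\theta_{k,m}\delta_x+(1-\theta_{k,m})\rho_{x,m},\qquad \theta_{k,m}=\prod_{\ell=k}^{m-1}\alpha_\ell .
\]
Now let $\tau_m\in T(A)$ be any extension of the state $\delta_{y_m}$ pulled back along $\phi_{m,\infty}$. Traces of $A_m$ extend to $A$ because $T(A)=\varprojlim T(A_m)$ and the maps $\phi^{\#}$ are surjective, or alternatively via Hahn–Banach with averaging. Take a weak-$*$ cluster point $\tau_x$ of $(\tau_m)$ in the compact set $T(A)$.

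\textbf{Passing to the limit.} By continuity of $\phi_{k,\infty}^{\#}$, the trace $\phi_{k,\infty}^{\#}(\tau_x)$ is a cluster point of $\theta_{k,m}\delta_x+(1-\theta_{k,m})\rho_{x,m}$. Since $\theta_{k,m}\downarrow\theta_k$, a subsequence gives $\rho_{x,m}\to\rho'$ and
\[
\phi_{k,\infty}^{\#}(\tau_x)=\theta_k\delta_x+(1-\theta_k)\rho'
\]
whenever $\theta_k<1$. If $\theta_k=1$, the limit is simply $\delta_x$ and we take $\rho_x$ arbitrary. It remains to absorb any extra weight: $\rho'$ may itself charge $x$, but that is harmless, so we set $\rho_x=\rho'$.

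\textbf{Main obstacle.} The delicate step is the claim that only the all-coordinate-projection paths are guaranteed to land at $x$. We need the fraction of such paths to equal exactly $\prod\alpha_\ell$. This relies on the block counting being uniform, which holds because each stage's composite multiplicities multiply. We also need to handle the case $\theta_k=0$ for small $k$, where the statement is trivial.
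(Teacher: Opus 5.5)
Your choice of $y_m$, the path count giving $\phi_{k,m}^{\#}(\delta_{y_m})=\theta_{k,m}\delta_x+(1-\theta_{k,m})\rho_{x,m}$, and your final limiting step all match the paper's proof. The gap is in the step that carries $\delta_{y_m}$ up to $T(A)$. You take $\tau_m\in T(A)$ with $\phi_{m,\infty}^{\#}(\tau_m)=\delta_{y_m}$, citing either surjectivity of the maps $\phi^{\#}$ or Hahn--Banach with averaging. Neither is available.

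Your own one-stage computation rules out surjectivity. For every $\nu$, $\phi_m^{\#}(\nu)$ contains the summand $(1-\alpha_m)\beta_m$, so it gives mass at least $1/N_m$ to each evaluation point of $\phi_m$. Hence, as soon as $\phi_m$ has an evaluation point different from $y_m$,
\[
\delta_{y_m}\notin\phi_m^{\#}(T(A_{m+1}))\supseteq\phi_{m,\infty}^{\#}(T(A)).
\]
Hahn--Banach only extends $\delta_{y_m}$ to a state, not a trace. In general a tracial state on a unital subalgebra has no tracial extension; for example, $\operatorname{diag}(\lambda_1,\lambda_2)\mapsto\lambda_1$ on the diagonal of $M_2(\mathbb{C})$ has none. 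So the identity $\phi_{k,\infty}^{\#}(\tau_m)=\phi_{k,m}^{\#}(\delta_{y_m})$, on which your cluster-point argument rests, is unjustified.

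The repair, which is what the paper does, is to avoid exact extensions altogether. Work in the compact space $\prod_j T(A_j)$ with the points
\[
\sigma^{(m)}=\bigl(\phi_{1,m}^{\#}(\delta_{y_m}),\ldots,\phi_{m-1,m}^{\#}(\delta_{y_m}),\delta_{y_m},\omega_{m+1},\omega_{m+2},\ldots\bigr),
\]
where the $\omega_j\in T(A_j)$ are arbitrary fillers. For each fixed $j$, coordinates $j$ and $j+1$ of $\sigma^{(m)}$ are compatible as soon as $m>j$. By continuity of $\phi_j^{\#}$, every cluster point is therefore a compatible sequence, i.e.\ a trace $\tau_x\in\varprojlim T(A_j)=T(A)$. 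Its $k$th coordinate is a cluster point of $\theta_{k,m}\delta_x+(1-\theta_{k,m})\rho_{x,m}$, and your last paragraph then finishes the proof unchanged. The ``lifting one stage'' paragraph is a dead end and can be dropped.
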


\begin{proof}
For \(l>k\), consider the finite-stage map
\[
\phi_{k,l}:A_k\to A_l.
\]
The diagonal entries of \(\phi_{k,l}\) are indexed by finite paths of
eigenvalue maps from stage \(k\) to stage \(l\). A path contributes a
coordinate projection from \(X_l\) to \(X_k\) precisely when every eigenvalue
map along the path is a coordinate projection. The proportion of such purely
coordinate paths is
\[
\theta_{k,l}:=\prod_{\ell=k}^{l-1}\alpha_\ell.
\]
All remaining paths contain at least one point evaluation. Such paths
contribute maps \(X_l\to X_k\) which are independent of at least the coordinate
chosen at the first point-evaluation step; in particular, their contribution to
a trace on \(A_k\) is some probability measure on \(X_k\).

We claim that, for each \(l>k\), there is a point \(y_l\in X_l\) such that
restriction of the point-mass trace \(\delta_{y_l}\in \Prob(X_l)\) to
\(A_k\) has the form
\[
\phi_{k,l}^{\#}(\delta_{y_l})
=
\theta_{k,l}\delta_x+(1-\theta_{k,l})\rho_{x,l}
\]
for some \(\rho_{x,l}\in \Prob(X_k)\).

Indeed, the purely coordinate paths are coordinate projections from \(X_l\) to
\(X_k\). Choose the coordinates of \(y_l\) appearing in these purely coordinate
paths so that each of those coordinate projections takes the value \(x\). This
is possible because imposing the same value \(x\) on repeated coordinate blocks
is consistent. The remaining coordinates may be chosen arbitrarily. Then each
purely coordinate path contributes \(\delta_x\), while the non-pure paths
contribute some probability measure \(\rho_{x,l}\). This proves the claim.

Now consider the compact product
\[
\prod_{m=1}^{\infty} T(A_m).
\]
For each \(l>k\), the trace \(\delta_{y_l}\in T(A_l)\) determines a compatible
finite string
\[
\bigl(
\phi_{1,l}^{\#}(\delta_{y_l}),
\phi_{2,l}^{\#}(\delta_{y_l}),
\ldots,
\phi_{l-1,l}^{\#}(\delta_{y_l}),
\delta_{y_l}
\bigr).
\]
After filling in the remaining coordinates with a fixed tracial state from each building block. By compactness, passing to a subnet as \(l\to\infty\), these finite strings
converge coordinatewise to a compatible sequence
\[
(\tau_1,\tau_2,\tau_3,\ldots)
\in \varprojlim (T(A_m),\phi_m^{\#})=T(A).
\]
Let \(\tau_x\in T(A)\) be the trace corresponding to this compatible sequence.
The \(k\)-th coordinate of \(\tau_x\) is the limit of
\[
\phi_{k,l}^{\#}(\delta_{y_l})
=
\theta_{k,l}\delta_x+(1-\theta_{k,l})\rho_{x,l}.
\]
Since \(\Prob(X_k)\) is compact in the weak-* topology, after passing to a
further subnet we may assume
\[
\rho_{x,l}\to \rho_x
\]
for some \(\rho_x\in \Prob(X_k)\). Also,
\[
\theta_{k,l}\to \theta_k.
\]
Therefore
\[
\phi_{k,\infty}^{\#}(\tau_x)
=
\theta_k\delta_x+(1-\theta_k)\rho_x.
\]
This proves the lemma.
\end{proof}

Let $A = \varinjlim (A_i, \phi_i)$ be an inductive limit of homogeneous algebras. 

Each connecting map  
\begin{align}
\phi_i : A_i \to A_{i+1}
\end{align}

induces a continuous affine map  
\begin{align}
\phi_i^\# : T(A_{i+1}) \to T(A_i)
\end{align}

and a dual map  
\begin{align}
\phi_i^\bullet : \mathrm{Aff}(T(A_i)) \to \mathrm{Aff}(T(A_{i+1}))
\end{align}

given by  
\begin{align}
\phi_i^\bullet(f)(\tau)
= f\bigl(\phi_i^\#(\tau)\bigr),
\quad f \in \mathrm{Aff}(T(A_i)).
\end{align}

\begin{prop}
\label{prop:villadsen-positive-density}
Let
\[
A=\varinjlim(A_i,\phi_i)
\]
be a Villadsen algebra of the first type with negligible point-evaluations. Then the inductive system has
positive finite-stage density in the sense of
definition~\ref{ass:positive-density}.
\end{prop}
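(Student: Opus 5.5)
We start from the standard fact that for an inductive limit of unital algebras the union $\bigcup_k \phi_{k,\infty}^{\bullet}(\Aff(T(A_k)))$ is uniformly dense in $\Aff(T(A))$. This holds because $T(A)=\varinjlim$-dual of $T(A_k)$, i.e. $T(A)\cong \varprojlim (T(A_k),\phi_k^{\#})$, and affine functions of the form $\tau\mapsto \tau(a)$ with $a\in A_k$ self-adjoint are dense. Fix a strictly positive $H\in\Aff(T(A))$, $\eps>0$ and $N$. By compactness of $T(A)$ there is $\delta>0$ with $H\geq 2\delta$, and we may assume $\eps<\delta$. By density, choose $k_0\geq N$ and $v\in \Aff(T(A_{k_0}))$ with $\|\phi_{k_0,\infty}^{\bullet}(v)-H\|<\eps/2$. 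Then $\phi_{k_0,\infty}^{\bullet}(v)>\delta$ on $T(A)$. However, $v$ itself may be negative on traces of $A_{k_0}$ not in the image of $\phi_{k_0,\infty}^{\#}$. The task is therefore to push $v$ to a later stage $k$ where it becomes positive on all of $T(A_k)$.

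\textbf{Key step.} Set $u_k:=\phi_{k_0,k}^{\bullet}(v)$ for $k\geq k_0$. Then $\phi_{k,\infty}^{\bullet}(u_k)=\phi_{k_0,\infty}^{\bullet}(v)$ is still within $\eps/2$ of $H$. It remains to show $u_k>0$ on $T(A_k)$ for $k$ large. Since $u_k$ is continuous and affine on the Bauer simplex $\Prob(X_k)$, it suffices to check positivity at the point masses $\delta_x$, $x\in X_k$.

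For such a point we invoke Lemma~\ref{lem:tail-coordinate-traces}. It provides $\tau_x\in T(A)$ and $\rho_x\in\Prob(X_k)$ with
\[
\phi_{k,\infty}^{\#}(\tau_x)=\theta_k\delta_x+(1-\theta_k)\rho_x.
\]
Evaluating $u_k$ and using affineness gives
\[
\theta_k u_k(\delta_x)=\phi_{k,\infty}^{\bullet}(u_k)(\tau_x)-(1-\theta_k)u_k(\rho_x)\geq \delta-(1-\theta_k)\|u_k\|.
\]
Moreover $\|u_k\|\leq \|v\|$, because $\phi^{\bullet}$ is a composition with a map of trace spaces and hence contractive in the sup norm. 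By Definition~\ref{defn:negligible-point-tails}, $\theta_k\to 1$, so we may pick $k\geq k_0$ with $(1-\theta_k)\|v\|<\delta/2$ and $\theta_k>0$. Then $u_k(\delta_x)>\delta/(2\theta_k)>0$ for every $x$. This lower bound is uniform in $x$, so $u_k$ is strictly positive on $T(A_k)$, and $\|\phi_{k,\infty}^{\bullet}(u_k)-H\|<\eps$ as required.

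\textbf{Main obstacle.} The delicate point is the density claim together with the meaning of $\theta_k$. We need $\theta_k>0$ for large $k$, which follows from $c_i>0$ eventually and $\prod\alpha_\ell\to1$. We also need the identification of $T(A)$ with the inverse limit, so that Lemma~\ref{lem:tail-coordinate-traces} genuinely produces traces on $A$ whose $k$-th coordinate is as stated. Otherwise the argument uses only uniform boundedness of $u_k$ and the fact that the $\delta_x$ carry weight $\theta_k\approx1$.
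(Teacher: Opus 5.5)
Your proposal is correct and follows the paper's proof essentially step for step. Both approximate $H$ at a finite stage and push the approximant forward unchanged to a stage $k$ where $(1-\theta_k)$ times its sup norm is small. Both then use Lemma~\ref{lem:tail-coordinate-traces} to bound $u_k(\delta_x)$ below. Your remark that this bound is uniform in $x$ is a slightly cleaner justification than the paper's bare appeal to affinity for passing from point masses to all of $\Prob(X_k)$.
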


\begin{proof}
Let
\[
H\in \Aff(T(A))
\]
be strictly positive. Since \(T(A)\) is compact and \(H\) is continuous, there
is a number \(m>0\) such that
\[
H(\tau)\geq m
\qquad \forall \tau\in T(A).
\]
Fix \(\eps>0\) and an initial stage \(N\).

By ordinary uniform density of finite-stage affine functions in
\(\Aff(T(A))\), there are a stage \(i\geq N\) and a function
\[
h_i\in \Aff(T(A_i))
\]
such that
\[
\left\|
\phi_{i,\infty}^{\bullet}(h_i)-H
\right\|
<
\min\left\{\frac{\eps}{2},\frac{m}{4}\right\}.
\]
Let
\[
B:=\|h_i\|+1.
\]

Choose \(k\geq i\) so large that
\[
1-\theta_k<\frac{m}{4B},
\qquad
\theta_k:=\prod_{\ell=k}^{\infty}\alpha_\ell.
\]
This is possible by Assumption~\ref{defn:negligible-point-tails}. Define
\[
u_k:=\phi_{i,k}^{\bullet}(h_i)
\in \Aff(T(A_k)).
\]
Then
\[
\phi_{k,\infty}^{\bullet}(u_k)
=
\phi_{i,\infty}^{\bullet}(h_i),
\]
and hence
\[
\left\|
\phi_{k,\infty}^{\bullet}(u_k)-H
\right\|<\eps.
\]
It remains to prove that
\[
u_k>0\quad\text{on }T(A_k).
\]
Since \(T(A_k)=\Prob(X_k)\), and \(u_k\) is affine and continuous, it is enough
to prove positivity on extreme traces, i.e. on point masses \(\delta_x\) for
\(x\in X_k\).

Fix \(x\in X_k\). By Lemma~\ref{lem:tail-coordinate-traces}, there exist
\(\tau_x\in T(A)\) and \(\rho_x\in \Prob(X_k)\) such that
\[
\phi_{k,\infty}^{\#}(\tau_x)
=
\theta_k\delta_x+(1-\theta_k)\rho_x.
\]
Now
\[
\phi_{k,\infty}^{\bullet}(u_k)(\tau_x)
=
\phi_{i,\infty}^{\bullet}(h_i)(\tau_x).
\]
By the choice of \(h_i\),
\[
\phi_{i,\infty}^{\bullet}(h_i)(\tau_x)
>
H(\tau_x)-\frac{m}{4}
\geq
\frac{3m}{4}.
\]
On the other hand,
\[
\begin{aligned}
\phi_{k,\infty}^{\bullet}(u_k)(\tau_x)
&=
u_k\bigl(\phi_{k,\infty}^{\#}(\tau_x)\bigr)\\
&=u_k\bigl(\theta_k\delta_x+(1-\theta_k)\rho_x\bigr)\\
&=\theta_k u_k(\delta_x)+(1-\theta_k)u_k(\rho_x).
\end{aligned}
\]
Therefore
\[
\theta_k u_k(\delta_x)+(1-\theta_k)u_k(\rho_x)>
\frac{3m}{4}.
\]
Since
\[
\|u_k\|=\|\phi_{i,k}^{\bullet}(h_i)\|\leq \|h_i\|<B,
\]
we have
\[
u_k(\rho_x)\leq B.
\]
Hence
\[
\begin{aligned}
\theta_k u_k(\delta_x)
&>
\frac{3m}{4}-(1-\theta_k)|u_k(\rho_x)|\\
&\geq
\frac{3m}{4}-(1-\theta_k)B\\
&>
\frac{3m}{4}-\frac{m}{4}
=
\frac{m}{2}.
\end{aligned}
\]
Since \(\theta_k>0\), it follows that
\[
u_k(\delta_x)>0.
\]
This holds for every \(x\in X_k\). Therefore \(u_k\) is strictly positive on
all extreme traces of \(T(A_k)\), and hence, by affinity, on all of \(T(A_k)\).
Thus
\[
u_k(\sigma)>0
\qquad \forall \sigma\in T(A_k).
\]
We have produced \(k\geq N\) and \(u_k\in \Aff(T(A_k))\) such that
\[
u_k>0\quad\text{on }T(A_k),
\]
and
\[
\left\|
\phi_{k,\infty}^{\bullet}(u_k)-H
\right\|<\eps.
\]
This proves positive finite-stage density.
\end{proof}

\begin{cor}
\label{cor:specified-positive}
Let $s\in\mathbb N$ and let
\[
 w\in\Aff(T(A_s)).
\]
Suppose that
\[
 \phi_{s,\infty}^{\bullet}(w)>0
 \qquad\text{on }T(A).
\]
Then there exists $l\geq s$ such that
\[
 \phi_{s,l}^{\bullet}(w)>0
 \qquad\text{on }T(A_l).
\]
\end{cor}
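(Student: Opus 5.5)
The plan is to rerun the argument of Proposition~\ref{prop:villadsen-positive-density} with $h_i$ replaced by the given $w$. There is no approximation step now; $H:=\phi_{s,\infty}^{\bullet}(w)$ itself is the strictly positive target. Since $T(A)$ is compact and $H$ is continuous, first fix $m>0$ with $H\geq m$ on $T(A)$. Put $B:=\|w\|+1$. Since each $\phi_{s,l}^{\#}$ maps $T(A_l)$ into $T(A_s)$, we have $\|\phi_{s,l}^{\bullet}(w)\|\leq \|w\|<B$ for every $l\geq s$.

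Next I use the negligible point-evaluation tails hypothesis (Definition~\ref{defn:negligible-point-tails}) to choose $l\geq s$ so large that
\[
1-\theta_l<\frac{m}{2B},\qquad \theta_l=\prod_{\ell=l}^{\infty}\alpha_\ell ,
\]
and in particular $\theta_l>0$. Set $u:=\phi_{s,l}^{\bullet}(w)\in\Aff(T(A_l))$, so that $\phi_{l,\infty}^{\bullet}(u)=H$. Because $T(A_l)\cong\Prob(X_l)$ and $u$ is continuous and affine, it suffices to prove $u(\delta_x)>0$ for each $x\in X_l$. A continuous affine function on $\Prob(X_l)$ is integration of a continuous function against the measure, so positivity at the point masses gives positivity everywhere.

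Fix $x\in X_l$. Lemma~\ref{lem:tail-coordinate-traces} gives $\tau_x\in T(A)$ and $\rho_x\in\Prob(X_l)$ with
\[
\phi_{l,\infty}^{\#}(\tau_x)=\theta_l\delta_x+(1-\theta_l)\rho_x .
\]
Evaluating $H$ at $\tau_x$ yields
\[
m\leq H(\tau_x)=\theta_l u(\delta_x)+(1-\theta_l)u(\rho_x)\leq \theta_l u(\delta_x)+(1-\theta_l)B .
\]
Hence $\theta_l u(\delta_x)> m-\tfrac{m}{2}>0$, and since $\theta_l>0$ we get $u(\delta_x)>0$. This proves the statement with this $l$.

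The only delicate point is the same one as in the proposition: Lemma~\ref{lem:tail-coordinate-traces} must produce a genuine limit trace whose $l$-th stage image carries the weight $\theta_l$ on $\delta_x$. That requires $\theta_l>0$, which holds because $c_i>0$ eventually and the tail product tends to $1$. Everything else is routine bookkeeping. An alternative route would be a compactness argument on the decreasing closed sets $\{\sigma\in T(A_l):\phi_{s,l}^{\bullet}(w)(\sigma)\leq 0\}$, pushed forward to $T(A_s)$. However, these images need not lie in $\phi_{s,\infty}^{\#}(T(A))$ without some control of the kind the tail lemma provides, so I would rely on the direct argument above.
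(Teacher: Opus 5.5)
Your proof is correct and is essentially the paper's own argument: the same lower bound $m$, the same $B=\|w\|+1$, a stage $l$ with $(1-\theta_l)B$ small, the trace $\tau_x$ from Lemma~\ref{lem:tail-coordinate-traces}, and positivity on point masses. Requiring $(1-\theta_l)B<m/2$ rather than the paper's $m/4$ is immaterial, and your integration argument for passing from point masses to all of $\Prob(X_l)$ is more careful than the paper's appeal to affineness alone.

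One correction to your closing aside: the compactness route you dismiss actually works in complete generality. For any inverse system of compact spaces, $\bigcap_{l\geq s}\phi_{s,l}^{\#}(T(A_l))=\phi_{s,\infty}^{\#}(T(A))$, by the same Tychonoff argument used in the proof of Lemma~\ref{lem:tail-coordinate-traces}. Hence the nested compact sets $\{w\leq 0\}\cap\phi_{s,l}^{\#}(T(A_l))$ have empty intersection, so one of them must already be empty. This proves the corollary for any unital inductive limit, without the negligible-tails hypothesis.
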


\begin{proof}
Set
\[
 H=\phi_{s,\infty}^{\bullet}(w).
\]
Since $H$ is continuous and strictly positive on the compact space $T(A)$,
there exists $m>0$ such that
\[
 H(\tau)\geq m
 \qquad\text{for every }\tau\in T(A).
\]
Set
\[
 B=\|w\|+1.
\]
Choose $l\geq s$ so large that
\[
 (1-\theta_l)B<\frac{m}{4}.
\]
Put
\[
 W=\phi_{s,l}^{\bullet}(w)\in\Aff(T(A_l)).
\]

Fix $x\in X_l$.  By the tail-coordinate trace lemma, there exist
$\tau_x\in T(A)$ and $\rho_x\in\Prob(X_l)$ such that
\[
 \phi_{l,\infty}^{\#}(\tau_x)
 =\theta_l\delta_x+(1-\theta_l)\rho_x.
\]
Since
\[
 \phi_{l,\infty}^{\bullet}(W)
 =\phi_{s,\infty}^{\bullet}(w)
 =H,
\]
we have
\[
 \begin{aligned}
 m
 &\leq H(\tau_x)\\
 &=W\bigl(\phi_{l,\infty}^{\#}(\tau_x)\bigr)\\
 &=\theta_lW(\delta_x)+(1-\theta_l)W(\rho_x).
 \end{aligned}
\]
Moreover,
\[
 |W(\rho_x)|\leq\|W\|\leq\|w\|<B.
\]
Therefore
\[
 \begin{aligned}
 \theta_lW(\delta_x)
 &\geq m-(1-\theta_l)W(\rho_x)\\
 &\geq m-(1-\theta_l)B\\
 &>\frac{3m}{4}.
 \end{aligned}
\]
Thus
\[
 W(\delta_x)>0
 \qquad\text{for every }x\in X_l.
\]
Since $T(A_l)\cong\Prob(X_l)$ and $W$ is affine, positivity on all point
masses implies
\[
 W>0
 \qquad\text{on }T(A_l).
\]
Hence
\[
 \phi_{s,l}^{\bullet}(w)>0
 \qquad\text{on }T(A_l).
\]

This is exactly the proof of Proposition~\ref{prop:villadsen-positive-density} with the
initial finite-stage approximation chosen to be $w$ itself and with
approximation error zero.
\end{proof}

\begin{lem}
\label{lem:recursive-selection}
Let $A=\varinjlim(M_{r_{i}}\otimes C(X_{i}),\phi_{i})$ be a type I Villadsen algebra with negligible point evaluations and with $\lim_i r_i=\infty$. Let $0\neq f$ be a strictly positive lower semicontinuous affine function on
$T(A)$.  Choose
\[
 0<h_1<h_2<\cdots,
 \qquad
 h_n\in\Aff(T(A)),
 \qquad
 \sup_n h_n=f.
\]
Then there exist increasing stages $j_n$, functions
$g_n\in\Aff(T(A_{j_n}))_+$, and numbers $\eta_n\downarrow0$ such that,
writing $G_n=\phi_{j_n,\infty}^\bullet(g_n)$,
\begin{align}
 &\frac1{r_{j_n}}<\eta_n,\\
 &\|G_n-h_n\|\longrightarrow0,\\
 &G_n+\eta_n<h_n,\\
 &\phi_{j_n,j_{n+1}}^\bullet(g_n)+\eta_n+\eta_{n+1}<g_{n+1}
   \quad\text{on }T(A_{j_{n+1}}).
\end{align}
\end{lem}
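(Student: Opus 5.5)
The plan is a recursive construction in which each $g_n$ is a finite-stage approximation of a slightly lowered copy of $h_n$. Corollary~\ref{cor:specified-positive} then pushes every strict inequality that holds on $T(A)$ down to a finite stage. The basic quantities are the gaps. Since $h_1>0$ and $h_{n+1}-h_n>0$ are continuous on the compact space $T(A)$, the numbers
\[
m_1:=\min_{T(A)}h_1>0,\qquad \delta_n:=\min_{T(A)}(h_{n+1}-h_n)>0
\]
are available. I would choose $\eta_n\downarrow 0$ in advance with
\[
\eta_1<m_1/4,\qquad \eta_{n+1}\le \eta_n,\qquad 4\eta_{n+1}<\delta_n .
\]
For $n\ge 1$ this gives $h_n-2\eta_n\ge m_1-2\eta_1>0$.

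The $n$-th step runs as follows. Put $H_n:=h_n-2\eta_n\in\Aff(T(A))$, which is strictly positive. By Proposition~\ref{prop:villadsen-positive-density}, for any prescribed starting stage there are a stage $k$ and $u\in\Aff(T(A_k))$ with $u>0$ on $T(A_k)$ and $\|\phi_{k,\infty}^\bullet(u)-H_n\|<\epsilon_n$, where $\epsilon_n:=\eta_n/4$. Set $g_n:=u$ (so $g_n\ge 0$) and $G_n=\phi^\bullet_{k,\infty}(g_n)$. Then:
\begin{enumerate}
\item $G_n+\eta_n<h_n-2\eta_n+\epsilon_n+\eta_n<h_n$;
\item $\|G_n-h_n\|\le 2\eta_n+\epsilon_n\to 0$;
\item we may enlarge $k$ freely, replacing $g_n$ by $\phi^\bullet_{k,k'}(g_n)$. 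This leaves $G_n$ unchanged and keeps $g_n>0$, because $\phi^\#$ maps traces to traces. Since $r_i\to\infty$, we can therefore also arrange $1/r_{j_n}<\eta_n$ and $j_n>j_{n-1}$.
\end{enumerate}

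The main obstacle is the compatibility condition, which has to hold at a finite stage and not merely on $T(A)$. After $g_n$ at stage $j_n$ and $g_{n+1}$ at some stage $k\ge j_n$ have been produced as above, consider
\[
w:=g_{n+1}-\phi^\bullet_{j_n,k}(g_n)-\eta_n-\eta_{n+1}\in\Aff(T(A_k)).
\]
Constants are preserved by $\phi^\bullet$, since the maps are unital. On $T(A)$ we get
\[
\phi^\bullet_{k,\infty}(w)=G_{n+1}-G_n-\eta_n-\eta_{n+1}\ge (h_{n+1}-2\eta_{n+1}-\epsilon_{n+1})-(h_n-2\eta_n+\epsilon_n)-\eta_n-\eta_{n+1}.
\]
This is at least $\delta_n-3\eta_{n+1}+\eta_n-\epsilon_n-\epsilon_{n+1}>0$ by the choice of the $\eta$'s and $\epsilon$'s. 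Corollary~\ref{cor:specified-positive} now yields $l\ge k$ with $\phi^\bullet_{k,l}(w)>0$ on $T(A_l)$. Setting $j_{n+1}:=l$ and replacing $g_{n+1}$ by $\phi^\bullet_{k,l}(g_{n+1})$ gives exactly
\[
\phi^\bullet_{j_n,j_{n+1}}(g_n)+\eta_n+\eta_{n+1}<g_{n+1}\quad\text{on }T(A_{j_{n+1}}),
\]
since $\phi^\bullet_{k,l}\circ\phi^\bullet_{j_n,k}=\phi^\bullet_{j_n,l}$.

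Enlarging $j_{n+1}$ further, to ensure $1/r_{j_{n+1}}<\eta_{n+1}$, preserves this inequality, because $\phi^\bullet$ is positive and fixes constants. Pushing $g_{n+1}$ forward in this way never affects any condition already secured for indices up to $n$. The induction therefore closes.
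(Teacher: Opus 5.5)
Your proof is correct, but it secures the finite-stage gap differently from the paper.

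The paper builds the gap into the recursion. It applies Proposition~\ref{prop:villadsen-positive-density} not to $h_{n+1}$ itself but to the increment $H_{n+1}=h_{n+1}-G_n-2\eps_{n+1}$, which gives $u_{n+1}>0$ on $T(A_{j_{n+1}})$. It then sets $g_{n+1}=\phi^\bullet_{j_n,j_{n+1}}(g_n)+\eta_n+\eta_{n+1}+u_{n+1}$. The required inequality on $T(A_{j_{n+1}})$ is then literally the statement $u_{n+1}>0$. Only the two-sided estimate $h_{n+1}-3\eps_{n+1}<G_{n+1}<h_{n+1}-\eps_{n+1}$ has to be checked on $T(A)$.

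You instead approximate each lowered target $h_n-2\eta_n$ independently. You check the strict gap $G_{n+1}-G_n-\eta_n-\eta_{n+1}>0$ on the limit trace space, transfer it to a finite stage with Corollary~\ref{cor:specified-positive}, and push $g_{n+1}$ forward to that stage.

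What each route buys:
\begin{itemize}
\item The paper's construction invokes the tail-trace argument once per step and needs no later enlargement of the stage.
\item Yours invokes it twice per step, once inside the Proposition and once in the Corollary. In exchange it decouples the choice of the $g_n$ from the compatibility condition: any positive finite-stage approximants whose pullbacks have a uniform gap on $T(A)$ become compatible simply by moving to later stages.
\item That decoupling is the same mechanism the paper later uses in the proof of Theorem~\ref{thm:interleaving}.
\item It also makes the bound $\|G_n-h_n\|\leq 2\eta_n+\eps_n$ independent of the previous step.
\end{itemize}
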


\begin{proof}
Set
\[
 \Delta_n=\min_{\tau\in T(A)}(h_{n+1}-h_n)(\tau)>0.
\]
Choose $\eps_n\downarrow0$ with
$4\eps_{n+1}<\Delta_n$ and
$4\eps_1<\min h_1$.  Put
\[
 \eta_n=\frac18\min\{\eps_n,\eps_{n+1}\}.
\]
At each step choose $j_n$ sufficiently large that
$1/r_{j_n}<\eta_n$.

Apply Proposition~\ref{prop:villadsen-positive-density} to $h_1-2\eps_1$ with error
$\eps_1/2$ to obtain $j_1$ and $g_1>0$ satisfying
\[
 h_1-\frac52\eps_1<G_1<h_1-\frac32\eps_1.
\]
Suppose $j_n,g_n$ have been chosen with
\[
 h_n-3\eps_n<G_n<h_n-\eps_n.
\]
Then
\[
 H_{n+1}:=h_{n+1}-G_n-2\eps_{n+1}
\]
is strictly positive because
$h_{n+1}-G_n>\Delta_n>4\eps_{n+1}$.  Apply positive finite-stage density to
$H_{n+1}$ with error $\eps_{n+1}/2$, at a stage $j_{n+1}>j_n$, to obtain
$u_{n+1}>0$.  Define
\[
 g_{n+1}
 =\phi_{j_n,j_{n+1}}^\bullet(g_n)
  +\eta_n+\eta_{n+1}+u_{n+1}.
\]
The strict finite-stage gap is immediate.  Pulling to $T(A)$ and using the
error bounds gives
\[
 h_{n+1}-3\eps_{n+1}<G_{n+1}<h_{n+1}-\eps_{n+1}.
\]
This proves the induction.
\end{proof}

\begin{lem}
\label{lem:compatibility_along_sequence}
Suppose that
\[
A=\varinjlim(A_i,\phi_i),
\qquad
A_i=M_{r_i}\otimes C(X_i),
\]
where each \(X_i\) is compact metrizable and the connecting maps are diagonal with negligible point evaluations.
Let \(i<j\). Let
\[
g_i\in \Aff(T(A_i))_+,
\qquad
 g_j\in \Aff(T(A_j))_+,
\]
be both bounded and let
\[
a_i\in M_{k(g_i)}(A_i)_+,
\qquad
 a_j\in M_{k(g_j)}(A_j)_+
\]
be the strictly diagonal positive elements corresponding to $g_i$ and $g_j$ as in lemma \ref{cor:bounded_fn_rep}. Suppose that there are constants
\(\eta_i,\eta_j>0\) such that
\[
0\leq g_i-\widehat{a_i}<\eta_i
\quad\text{on }T(A_i),
\]
and
\[
0\leq g_j -\widehat{a_j}<\eta_j
\quad\text{on }T(A_j).
\]
Assume also that, after mapping \(g_i\) forward to \(T(A_j)\), one has
\[
\phi_{i,j}^{\bullet}(g_i)+\eta_i+\eta_j<g_j
\quad\text{on }T(A_j).
\]
Then
\[
\phi_{i,j}^{(k(g_i))}(a_i)\precsim a_j
\]
in \(A_j\otimes \mathcal{K}\). Consequently,
\[
\phi_{i,\infty}^{(k(g_i))}(a_i)\precsim \phi_{j,\infty}^{(k(g_j))}(a_j)
\]
in \(A\otimes \mathcal K\), $\phi^{(\ell)}$ denotes the amplification of a map $\phi$ up to $M_{\ell}$, namely $\phi_{i,j}^{(\ell)}:=\text{id}_{M_{\ell}}\otimes \phi_{i,j}$. Specifically,

\[
\phi_{i,\infty}\otimes \text{id}_{\mathcal{K}}(a_i)\precsim \phi_{j,\infty}^{(k(g_j))}(a_j)
\]
\end{lem}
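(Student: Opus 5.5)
The plan is to reduce the subequivalence $\phi_{i,j}^{(k(g_i))}(a_i)\precsim a_j$ to a pointwise rank comparison on $X_j$ and then invoke Lemma~\ref{le2.20}. First I would put both elements in a common matrix algebra. Set $K=\max\{k(g_i),k(g_j)\}$ and regard $\phi_{i,j}^{(k(g_i))}(a_i)$ and $a_j$ as elements of $M_K(A_j)\cong M_{Kr_j}(C(X_j))$, padding by zeros in the lower right corner. Padding does not change Cuntz classes or the dimension functions $\widehat{\cdot}$, and it preserves strict diagonality because the added zero entries have empty cozero sets at the end of the chain.

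Next I would show that $\phi_{i,j}^{(k(g_i))}(a_i)$ is diagonal. The map $\phi_{i,j}$ is a composition of diagonal maps, so it sends a diagonal matrix $\mathrm{diag}(f_1,\dots)$ over $C(X_i)$ to a matrix whose blocks are $\mathrm{diag}(f\circ\pi_1,\dots,f\circ\pi_R)$. After conjugating by a permutation unitary, which does not change the Cuntz class, the amplified image of the diagonal element $a_i$ is diagonal in $M_{Kr_j}(C(X_j))$. By Lemma~\ref{le2.19} it is then Cuntz equivalent to a strictly diagonal element $a_i'$. Cuntz equivalence commutes with the evaluations $\mathrm{ev}_y$, which are $*$-homomorphisms, so $\mathrm{rank}(a_i'(y))=\mathrm{rank}(\phi_{i,j}^{(k(g_i))}(a_i)(y))$ for every $y\in X_j$.

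The key estimate is the rank comparison. For $y\in X_j$ I would use $d_{\tau}(\phi(c))=d_{\tau\circ\phi}(c)$, which follows from $\phi(c)^{1/n}=\phi(c^{1/n})$. This gives
\[
\frac{\mathrm{rank}\bigl(\phi_{i,j}^{(k(g_i))}(a_i)(y)\bigr)}{r_j}
=\widehat{a_i}\bigl(\phi_{i,j}^{\#}(\delta_y)\bigr)
\le g_i\bigl(\phi_{i,j}^{\#}(\delta_y)\bigr)
=\phi_{i,j}^{\bullet}(g_i)(\delta_y).
\]
By hypothesis,
\[
\phi_{i,j}^{\bullet}(g_i)(\delta_y)<g_j(\delta_y)-\eta_i-\eta_j
<\widehat{a_j}(\delta_y)+\eta_j-\eta_i-\eta_j
<\widehat{a_j}(\delta_y)=\frac{\mathrm{rank}(a_j(y))}{r_j}.
\]
Hence $\mathrm{rank}(a_i'(y))\le \mathrm{rank}(a_j(y))$ for all $y\in X_j$. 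Since $a_j$ is strictly diagonal by construction, Lemma~\ref{le2.20} gives $a_i'\precsim a_j$, and therefore $\phi_{i,j}^{(k(g_i))}(a_i)\precsim a_j$.

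Finally, applying the $*$-homomorphism $\phi_{j,\infty}\otimes\mathrm{id}_{\mathcal K}$ preserves $\precsim$. Together with $\phi_{i,\infty}=\phi_{j,\infty}\circ\phi_{i,j}$, this yields the statements in $A\otimes\mathcal K$. I expect the main obstacle to be bookkeeping rather than substance. One point is checking that the amplification $\mathrm{id}_{M_\ell}\otimes\phi_{i,j}$ of a diagonal element is diagonal up to a permutation unitary. The other is checking that the normalizations in $\widehat{\cdot}$ (unnormalized $\mathrm{Tr}_s$ against normalized $\mathrm{tr}_{r_j}$) match on both sides, so that the inequality of dimension functions at $\delta_y$ really is an inequality of integer ranks.
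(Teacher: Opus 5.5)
Your proposal is correct and follows essentially the same route as the paper: turn the trace hypothesis into a pointwise rank inequality at the point-mass traces, replace $\phi_{i,j}^{(k(g_i))}(a_i)$ by a Cuntz-equivalent strictly diagonal element via Lemma~\ref{le2.19}, conclude with Lemma~\ref{le2.20}, and then apply $\phi_{j,\infty}\otimes\mathrm{id}_{\mathcal K}$. The differences are cosmetic: you pad to a common matrix size at the outset, and you evaluate directly at $\delta_y$ rather than first establishing $\widehat{\phi_{i,j}^{(k(g_i))}(a_i)}<\widehat{a_j}$ on all of $T(A_j)$.
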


\begin{proof}
Let $k_i:=k(g_i)$ and $k_j:=k(g_j)$. We prove the finite-stage comparison
\begin{align}
\phi_{i,j}^{(k_{i})}(a_i)\precsim a_j
\end{align}
inside \(A_j\otimes \mathcal K\). By padding both elements $\phi_{i,j}^{(k_{i})}(a_i)$ and $(a_j)$ with enough zeros so that they sit inside the same matrix algebra $M_N\otimes A_j$, we can then apply the limit map \(\phi_{j,\infty}^{N}\) so that it gives
\begin{align}
\phi_{j,\infty}^{N}(\phi_{i,j}^{(k_{i})}(a_i)\oplus 0)
\precsim
\phi_{j,\infty}^{N}(a_j\oplus 0),
\end{align}
because \(*\)-homomorphisms preserve Cuntz subequivalence.

Let \(\tau\in T(A_j)\). By definition of the dual affine map,
\begin{align}
\phi_{i,j}^{\bullet}(h)(\tau)=
h(\tau\circ \phi_{i,j})
\qquad
(h\in \Aff(T(A_i))).
\end{align}
We extend $\tau\in T(A_j)$ to $\tau^{k_i}:=\text{Tr}_{k_i}\otimes \tau$, where $\text{Tr}_k$ is the unnormalized trace on $M_k$.
Therefore, for the dimension function induced by \(a_i\), we have
\begin{align}
\widehat{\phi_{i,j}^{(k_{i})}(a_i)}(\tau)
&=
d_{\tau^{(k_i)}}(\phi_{i,j}^{(k_{i})}(a_i))\\
&=
d_{\tau^{(k_{i})}\circ \phi_{i,j}^{(k_{i})}}(a_i)\\
&=
\widehat{a_i}(\tau\circ \phi_{i,j})\\
&=
\phi_{i,j}^{\bullet}(\widehat{a_i})(\tau).
\end{align}
Using the hypothesis
\begin{align}
0\leq g_i-\widehat{a_i}<\eta_i
\quad\text{on }T(A_i),
\end{align}
we get
\begin{align}
\widehat{a_i}\leq g_i.
\end{align}
Pulling this inequality forward by \(\phi_{i,j}^{\bullet}\), we obtain
\begin{align}
\phi_{i,j}^{\bullet}(\widehat{a_i})
\leq
\phi_{i,j}^{\bullet}(g_i)
\quad\text{on } T(A_{j}).
\end{align}
Hence
\begin{align}
\widehat{\phi_{i,j}^{(k_i)}(a_i)}
\leq
\phi_{i,j}^{\bullet}(g_i).
\end{align}
By our hypothesis,
\begin{align}
\phi_{i,j}^{\bullet}(g_i)+\eta_i+\eta_j<g_j.
\end{align}
Subtracting \(\eta_j\) from both sides gives
\begin{align}
\phi_{i,j}^{\bullet}(g_i)+\eta_i<g_j-\eta_j.
\end{align}
Consequently,
\begin{align}
\widehat{\phi_{i,j}^{(k_i)}(a_i)}<g_j-\eta_j.
\end{align}
On the other hand, the second approximation inequality says
\begin{align}
0\leq g_j-\widehat{a_j}<\eta_j.
\end{align}
In particular,
\begin{align}
g_j-\eta_j<\widehat{a_j}\leq g_j.
\end{align}
Combining the previous estimates gives
\begin{align}
\widehat{\phi_{i,j}^{(k_i)}(a_i)}
<
g_j-\eta_j
<
\widehat{a_j}.
\end{align}
Thus
\begin{align}
\widehat{\phi_{i,j}^{(k_i)}(a_i)}(\tau)
<
\widehat{a_j}(\tau)
\qquad
\forall \tau\in T(A_j).
\end{align}
In particular,
\begin{align}
\widehat{\phi_{i,j}^{(k_i)}(a_i)}(\tau)
\leq
\widehat{a_j}(\tau)
\qquad
\forall \tau\in T(A_j).
\end{align}

We now convert this trace inequality into a pointwise rank inequality. Write
\begin{align}
A_j=M_{r_j}\otimes C(X_j).
\end{align}
For each \(x\in X_j\), let \(\tau_x\in T(A_j)\) be the trace obtained by
first evaluating at \(x\) and then applying the normalized matrix trace on
\(M_{r_j}\). Explicitly,
\begin{align}
\tau_x(b)=\frac{1}{r_j}\operatorname{Tr}(b(x)),
\qquad
b\in A_j.
\end{align}
For any positive element \(b\in M_{s}(A_j)_+\), the associated dimension function at
\(\tau_x^{(s)}\) is
\begin{align}
d_{\tau_x^{(s)}}(b)
&=
\lim_{n\to\infty}\tau_x^{(s)}(b^{1/n})\\
&=
\lim_{n\to\infty}\frac{1}{r_j}\operatorname{Tr}(b(x)^{1/n}).
\end{align}
If the nonzero eigenvalues of \(b(x)\) are \(\lambda_1,\ldots,\lambda_m>0\),
then the nonzero eigenvalues of \(b(x)^{1/n}\) are
\begin{align}
\lambda_1^{1/n},\ldots,\lambda_m^{1/n}.
\end{align}
For each \(1\leq \ell\leq m\),
\begin{align}
\lim_{n\to\infty}\lambda_\ell^{1/n}=1,
\end{align}
while zero eigenvalues remain zero. Therefore
\begin{align}
\lim_{n\to\infty}\operatorname{Tr}(b(x)^{1/n})=
\operatorname{rank}(b(x)).
\end{align}
Thus
\begin{align}
d_{\tau_x}(b)=\frac{\operatorname{rank}(b(x))}{r_j}.
\end{align}
Applying this to \(b=\phi_{i,j}^{(k_i)}(a_i)\) and to \(b=a_j\), the inequality
\begin{align}
\widehat{\phi_{i,j}^{(k_i)}(a_i)}(\tau_x)
\leq
\widehat{a_j}(\tau_x)
\end{align}
becomes
\begin{align}
\frac{\operatorname{rank}(\phi_{i,j}^{(k(g_{i}))}(a_i)(x))}{r_j}
\leq
\frac{\operatorname{rank}(a_j(x))}{r_j}.
\end{align}
Multiplying by \(r_j\), we get
\begin{align}
\operatorname{rank}(\phi_{i,j}^{(k_i)}(a_i)(x))
\leq
\operatorname{rank}(a_j(x))
\qquad
\forall x\in X_j.
\end{align}

The element \(a_j\) is strictly diagonal by hypothesis. The element
\(\phi_{i,j}^{(k_i)}(a_i)\) is diagonal because \(\phi_{i,j}\) is diagonal, but its
entries may not be ordered so as to be strictly diagonal. By
Lemma~\ref{le2.19}, there exists a strictly diagonal positive element
\begin{align}
c\in (A_j\otimes M_{k_i})_+
\end{align}
such that
\begin{align}
c\sim_{\Cu}\phi_{i,j}^{(k(g_{i}))}(a_i).
\end{align}
Cuntz equivalent positive elements have the same dimension functions; hence
\begin{align}
\widehat c=\widehat{\phi_{i,j}^{(k(g_{i}))}(a_i)}.
\end{align}
Evaluating again at \(\tau_x\), we obtain
\begin{align}
\frac{\operatorname{rank}(c(x))}{r_j}
&=
\widehat c(\tau_x)\\
&=
\widehat{\phi_{i,j}^{(k_i)}(a_i)}(\tau_x)\\
&\leq
\widehat{a_j}(\tau_x)\\
&=
\frac{\operatorname{rank}(a_j(x))}{r_j}.
\end{align}
Therefore
\begin{align}
\operatorname{rank}(c(x))
\leq
\operatorname{rank}(a_j(x))
\qquad
\forall x\in X_j.
\end{align}
Now both \(c\) and \(a_j\) are strictly diagonal. Now we pad the elements $c,\phi_{i,j}^{(k_i)}(a_i)$ and $a_j$ so that Lemma~\ref{le2.20}
applies and gives
\begin{align}
c\precsim a_j.
\end{align}
Since
\begin{align}
\phi_{i,j}^{(k_i)}(a_i)\sim_{\Cu} c,
\end{align}
we conclude that
\begin{align}
\phi_{i,j}^{(k_i)}(a_i)\precsim a_j.
\end{align}
This proves the desired finite-stage comparison. Applying \(\phi_{j,\infty}\otimes \text{id}_{\mathcal K}\)
gives
\begin{align}
\phi_{i,\infty}^{(k_i)}(a_i)
&=(
\phi_{j,\infty}\otimes \text{id}_{\mathcal K})(\phi_{i,j}^{(k(g_{i}))}(a_i))\\
&\precsim
\phi_{j,\infty}^{(k_j)}(a_j).
\end{align}

\end{proof}

\begin{defn}
\label{def:admissible-realization}
Let $f\in\LAff(T(A))_{++}$. With the notation above, an
\emph{admissible realization} of $f$ consists of strictly increasing
stages $(j_n)$, functions
$f_n\in\Aff(T(A))_{++}$ and
$u_n\in\Aff(T(A_{j_n}))_{++}$, numbers $\eta_n\downarrow0$,
and strictly diagonal positive elements
$a_n\in M_{\ell_n}(A_{j_n})_+$, for some $\ell_n\in\mathbb N$,
satisfying
\begin{align}
    &0<f_n<f_{n+1},
    \qquad \sup_n f_n=f, \label{eq:admissible-approximants}\\
    &\|U_n-f_n\|_\infty\longrightarrow0,
    \qquad U_n+\eta_n<f_n
    \quad\text{on }T(A), \label{eq:admissible-limit-gap}\\
    &0\leq u_n-\widehat{a_n}<\eta_n
    \quad\text{on }T(A_{j_n}), \label{eq:admissible-rank-error}\\
    &\phi_{j_n,j_{n+1}}^\bullet(u_n)
      +\eta_n+\eta_{n+1}<u_{n+1}
    \quad\text{on }T(A_{j_{n+1}}),
    \label{eq:admissible-stage-gap}
\end{align}
where $U_n=\phi_{j_n,\infty}^\bullet(u_n)$ and
$\widehat{a_n}(\tau)
=d_{\operatorname{Tr}_{\ell_n}\otimes\tau}(a_n)$,
with $\operatorname{Tr}_{\ell_n}$ the unnormalized matrix trace.

The Cuntz class associated with this realization is
\[
    x_f:=\sup_n x_n,
    \qquad
    x_n=
    \bigl[(\phi_{j_n,\infty}\otimes\mathrm{id}_{\K})(a_n)\bigr],
\]
where each $a_n$ is viewed in $A_{j_n}\otimes\K$.
The sequence $(x_n)$ is increasing by
Lemma~\ref{lem:compatibility_along_sequence}, so this supremum
exists in $\Cu(A)$.
\end{defn}

For later reference, the notation used for two realizations is summarized as
follows:
\[
\begin{array}{c|c|c}
 & \text{realization of }f & \text{realization of }g\\ \hline
\text{continuous approximants}
 & f_n\uparrow f & g_m\uparrow g\\
\text{finite-stage affine functions}
 & u_n\in\Aff(T(A_{j_n})) & v_m\in\Aff(T(A_{k_m}))\\
\text{pullbacks to }T(A)
 & U_n=\phi_{j_n,\infty}^{\bullet}(u_n)
 & V_m=\phi_{k_m,\infty}^{\bullet}(v_m)\\
\text{error tolerances}
 & \eta_n & \zeta_m\\
\text{positive elements}
 & a_n\in A_{j_n}\otimes\K
 & b_m\in A_{k_m}\otimes\K\\
\text{Cuntz classes}
 & x_n=[\phi_{j_n,\infty}(a_n)]
 & y_m=[\phi_{k_m,\infty}(b_m)]
\end{array}
\]
Thus $u_n$ and $v_m$ live at finite stages, while $U_n$ and $V_m$ are the
same functions viewed on the limit trace space. 

\begin{thm}
\label{thm:interleaving}
Let $f,g\in\LAff_{++}(T(A))$.  Let
\[
 x_f=\sup_nx_n
 \qquad\text{and}\qquad
 x_g=\sup_my_m
\]
be arbitrary admissible realizations, with notation as in the table above.  If
\[
 f\leq g,
\]
then
\[
 x_f\leq x_g.
\]
\end{thm}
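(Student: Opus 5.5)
It suffices to show that for each fixed $n$ we have $x_n\leq y_m$ for some $m$; taking suprema then gives $x_f=\sup_n x_n\leq \sup_m y_m=x_g$. Fix $n$. By \eqref{eq:admissible-limit-gap} we have $U_n+\eta_n<f_n<f_{n+1}\leq f\leq g$ on $T(A)$. The function $U_n+\eta_n$ is continuous and $g=\sup_m g_m$ is a supremum of an increasing sequence of continuous functions. By compactness of $T(A)$ (a Dini-type argument: the open sets $\{\tau: g_m(\tau)>U_n(\tau)+\eta_n/2\}$ increase and cover $T(A)$), there is $m_0$ with $U_n+\eta_n/2<g_m$ for all $m\geq m_0$. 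Since $\|V_m-g_m\|\to 0$ and $\zeta_m\to0$, we may enlarge $m$ so that also $\|V_m-g_m\|<\eta_n/8$ and $\zeta_m<\eta_n/8$. This gives a uniform gap on the limit trace space,
\[
U_n+\tfrac{\eta_n}{4}+\zeta_m<V_m\qquad\text{on }T(A).
\]

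Next I push everything to a common finite stage and convert the limit gap into a finite-stage gap. Enlarge $m$ if needed so that $k_m\geq j_n$, and set $w:=v_m-\phi_{j_n,k_m}^{\bullet}(u_n)-\zeta_m-\tfrac{\eta_n}{4}\in\Aff(T(A_{k_m}))$. Then $\phi_{k_m,\infty}^\bullet(w)>0$ on $T(A)$. By Corollary~\ref{cor:specified-positive} there exists $l\geq k_m$ with $\phi_{k_m,l}^\bullet(w)>0$ on $T(A_l)$. Hence $\phi_{j_n,l}^\bullet(u_n)+\tfrac{\eta_n}{4}+\zeta_m<\phi_{k_m,l}^\bullet(v_m)$ on $T(A_l)$.

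Now I compare ranks at stage $l$ as in the proof of Lemma~\ref{lem:compatibility_along_sequence}. Pulling forward \eqref{eq:admissible-rank-error} gives $\widehat{\phi_{j_n,l}(a_n)}\leq\phi^\bullet_{j_n,l}(u_n)$ and $\widehat{\phi_{k_m,l}(b_m)}>\phi^\bullet_{k_m,l}(v_m)-\zeta_m$, using that $\phi^\bullet$ preserves order and constants for unital maps and that dimension functions are natural. Therefore $\widehat{\phi_{j_n,l}(a_n)}<\widehat{\phi_{k_m,l}(b_m)}$ on $T(A_l)$. Evaluating at the point traces $\tau_x$ gives the pointwise inequality $\operatorname{rank}\phi_{j_n,l}(a_n)(x)\leq\operatorname{rank}\phi_{k_m,l}(b_m)(x)$ for all $x\in X_l$. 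Both elements are diagonal, since the connecting maps are diagonal. By Lemma~\ref{le2.19} they are Cuntz equivalent to strictly diagonal elements with the same ranks, and Lemma~\ref{le2.20} (after padding into a common matrix size) then yields $\phi_{j_n,l}(a_n)\precsim\phi_{k_m,l}(b_m)$. Applying $\phi_{l,\infty}\otimes\mathrm{id}_\K$ gives $x_n\leq y_m$.

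I expect the main obstacle to be the middle step: obtaining a \emph{finite-stage} gap from the gap on $T(A)$. A strict inequality on the limit trace space need not hold at any finite stage in general. Corollary~\ref{cor:specified-positive} (via negligible point evaluations) is exactly what rescues this, so I would take care that $w$ is defined at a single stage with the constants absorbed. A secondary point is making the Dini argument uniform, since $g$ is only lower semicontinuous. This uses the fact that each $g_m$ is continuous and that $U_n+\eta_n/2<g$ holds strictly.
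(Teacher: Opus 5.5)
Your proposal is correct and follows essentially the same route as the paper. Both arguments use a compactness (Dini-type) argument to get a uniform gap on $T(A)$, then Corollary~\ref{cor:specified-positive} to push that gap to a finite stage $l$, then the rank comparison from Lemma~\ref{lem:compatibility_along_sequence} via Lemmas~\ref{le2.19} and~\ref{le2.20}. The differences are cosmetic: you run the compactness argument on the continuous approximants $g_m$ and transfer it using $\|V_m-g_m\|\to0$, whereas the paper runs it directly on the increasing $V_m$ with $\sup_m V_m=g$. You also inline the rank comparison with the smaller slack $\eta_n/4+\zeta_m$ instead of $\eta_n+\zeta_m$, which suffices because only the $\zeta_m$ error of $b_m$ is actually used.
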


\begin{proof}
Fix $n$.  We will show that the $n$th term $x_n$ of the realization of $f$ is
below some term $y_m$ of the realization of $g$.

The admissibility inequality for the first realization is
\[
 U_n+\eta_n<f_n\leq f\leq g.
\]
Set
\[
 q_n:=U_n+\eta_n.
\]
Then $g-q_n$ is lower semicontinuous and strictly positive on the compact
space $T(A)$.  Hence there exists $\delta_n>0$ such that
\[
 q_n+\delta_n\leq g
 \qquad\text{on }T(A).
\]

The functions $V_m$ are increasing.  Indeed, the recursive finite-stage gap
for the second realization gives
\[
 V_m+\zeta_m+\zeta_{m+1}<V_{m+1}.
\]
Furthermore,
\[
 \sup_mV_m=g,
\]
because
\[
 \|V_m-g_m\|\longrightarrow0
 \qquad\text{and}\qquad
 g_m\uparrow g.
\]

For each $m$, define
\[
 \mathcal U_m
 =\left\{
 \tau\in T(A):
 V_m(\tau)>q_n(\tau)+\frac{3\delta_n}{4}
 \right\}.
\]
The sets $\mathcal U_m$ are open and increasing.  They cover $T(A)$ because
\[
 q_n+\frac{3\delta_n}{4}<g=\sup_mV_m.
\]
By compactness, there exists $m_0$ such that
\[
 \mathcal U_{m_0}=T(A).
\]
Choose $m\geq m_0$ so large that
\[
 \zeta_m<\frac{\delta_n}{4}.
\]
Then
\[
 \begin{aligned}
 V_m-U_n-\eta_n-\zeta_m
 &=V_m-q_n-\zeta_m\\
 &>\frac{\delta_n}{2}
 \end{aligned}
\]
on $T(A)$.

Choose a common stage
\[
 s\geq j_n,k_m
\]
and define the finite-stage difference
\[
 w
 :=\phi_{k_m,s}^{\bullet}(v_m)
 -\phi_{j_n,s}^{\bullet}(u_n)
 -\eta_n-\zeta_m
 \in\Aff(T(A_s)).
\]
Its pullback to the limit trace space is exactly the positive cross-gap above:
\[
 \begin{aligned}
 \phi_{s,\infty}^{\bullet}(w)
 &=V_m-U_n-\eta_n-\zeta_m\\
 &>\frac{\delta_n}{2}.
 \end{aligned}
\]

Apply Corollary~\ref{cor:specified-positive}, the specified finite-stage
consequence of Proposition~\ref{prop:villadsen-positive-density}.  There exists
$l\geq s$ such that
\[
 \phi_{s,l}^{\bullet}(w)>0
 \qquad\text{on }T(A_l).
\]
Expanding the definition of $w$, this says
\[
 \phi_{j_n,l}^{\bullet}(u_n)+\eta_n+\zeta_m
 <\phi_{k_m,l}^{\bullet}(v_m)
 \quad\text{on }T(A_l).
\]
Lemma~\ref{lem:compatibility_along_sequence} therefore gives (after replacing the elements with Cuntz-equivalent strictly diagonal elements)
\[
 \phi_{j_n,l}(a_n)\precsim\phi_{k_m,l}(b_m).
\]
Passing to the limit yields
\[
 x_n\leq y_m\leq x_g.
\]
Since this holds for every $n$,
\[
 x_f=\sup_nx_n\leq x_g.
\]
\end{proof}

\subsection{Spectral profiles and unitary orbit metrics}

The investigation of positive elements also requires the following notions from \cite{JACELON20153304} concerning the distance between unitary orbits and the spectral profile of a positive element.\\

The Serre--Swan theorem \cite{Swan1962} asserts that the functor of
continuous sections gives an equivalence between the category
of finite-rank complex vector bundles over a compact Hausdorff
space $X$ and the category of finitely generated projective
$C(X)$-modules. Consequently, every such vector bundle $\xi$
is isomorphic to the image bundle of a projection
$p_\xi\in M_N(C(X))$ for some $N\in\mathbb N$.
Moreover, two projections represent isomorphic vector bundles
if and only if they are Murray--von Neumann equivalent after
embedding them into a common matrix algebra by direct summing each with an appropriately sized zero element.

\begin{defn}
    Let $A$ be a $C^*$-algebra. We define the distance between the unitary orbits of two positive elements $a,b\in A$ as \newline $d_U(a,b)=\inf \Big\{||uau^*-b||:u\in \mathcal{U}(\tilde A)\Big\}$
\end{defn}

\begin{defn}\cite{JACELON20153304}
    Let $A$ be a unital, stably finite, exact $C^*$-algebra. For two positive contractions $a,b\in A$, we define $d_P(a,b)$ to be the infimum over all $r>0$ such that $\mu_{\tau,a}(U_r)\geq \mu_{\tau,b}(U)$ and $\mu_{\tau,b}(U_r)\geq \mu_{\tau,a}(U)$ for every open subset $U\subset (0,1]$ and every trace $\tau\in T(A)$, $\mu_{\tau,a}$ and $\mu_{\tau,b}$ are the Borel measures induced by $\tau $ on $C^*(a)$ and $C^*(b)$ respectively, and $U_r:=\{t:\text{dist}(t,U)<r\}$. 
\end{defn}

\begin{defn}\label{def2.26}
    Let $A$ be a unital $C^*$-algebra. For every positive contraction $a\in A$, we define the spectral profile $f_a:T(A)\to \text{Prob}([0,1])^+$ by $f_a(\tau)=\mu_{\tau,a}$ where $\mu_{\tau,a}$ is the Radon measure induced by $\tau$ on $\sigma(a)$ by the Riesz-Markov-Kakutani representation theorem. 
\end{defn}

Note that if $f_a=f_b$ then $d_P(a,b)=0$.

\begin{lem}\label{le2.27}
    Let $a,b\in M_{n}(C(X))^+$ where $X$ is compact and Hausdorff and $a$ and $b$ are contractions. 
    Then $\rk((a(x)-\epsilon)_+)=\rk((b(x)-\epsilon)_+)$ for all $x\in X$ and all $\epsilon\geq 0$ if and only if  $f_a (\tau)=f_b(\tau)$ for all $\tau\in \T(M_n(C(X)))$.
\end{lem}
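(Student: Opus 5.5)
The plan is to reduce both conditions to statements about the extreme traces $\tau_x=\operatorname{tr}_n\circ \mathrm{ev}_x$ and then compare eigenvalue distributions pointwise. Since $T(M_n(C(X)))\cong \Prob(X)$ and every trace is $\tau_\mu=\int_X \operatorname{tr}_n(\cdot(x))\,d\mu(x)$, for any $h\in C([0,1])$ we have
\[
\int h\,d\mu_{\tau_\mu,a}=\tau_\mu(h(a))=\int_X \operatorname{tr}_n(h(a(x)))\,d\mu(x).
\]
So $f_a=f_b$ on all of $T$ is equivalent to $\operatorname{tr}_n(h(a(x)))=\operatorname{tr}_n(h(b(x)))$ for all $x\in X$ and all $h\in C([0,1])$. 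For the forward direction, apply the hypothesis to the $\delta_x$. For the converse, integrate against $\mu$.

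Fix $x$, and let $\lambda_1\ge\dots\ge\lambda_n$ and $\beta_1\ge\dots\ge\beta_n$ be the eigenvalues (with multiplicity) of $a(x)$ and $b(x)$. Then
\[
\operatorname{tr}_n(h(a(x)))=\frac1n\sum_i h(\lambda_i),
\]
so $\mu_{\tau_x,a}=\frac1n\sum_i\delta_{\lambda_i}$. Hence $f_a(\tau_x)=f_b(\tau_x)$ if and only if the two eigenvalue lists agree as multisets.

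It remains to show that this multiset equality is equivalent to $\rk((a(x)-\epsilon)_+)=\rk((b(x)-\epsilon)_+)$ for all $\epsilon\ge0$. The key identity is
\[
\rk((a(x)-\epsilon)_+)=\#\{i:\lambda_i>\epsilon\}=n\,\mu_{\tau_x,a}((\epsilon,1]).
\]
Equal multisets clearly give equal counting functions. Conversely, the function $\epsilon\mapsto\#\{i:\lambda_i>\epsilon\}$ on $[0,\infty)$ determines the multiset. Its jumps locate the positive eigenvalues together with their multiplicities, and the number of zero eigenvalues is $n$ minus the count at $\epsilon=0$. Equivalently, the measures agree on all sets $(\epsilon,1]$ and have the same total mass $1$, so they agree on $\{0\}$ as well; since these sets form a $\pi$-system generating the Borel $\sigma$-algebra of $[0,1]$, the measures are equal.

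Chaining the equivalences gives the lemma. No step is a real obstacle. The only point needing care is handling $\epsilon=0$ and the zero eigenvalues, which is where the normalization $\mu_{\tau,a}([0,1])=1$ is used. The passage from point masses to general $\mu$ is routine, since both sides are given by integrals of the same continuous function $x\mapsto\operatorname{tr}_n(h(a(x)))$.
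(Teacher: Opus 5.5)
Your proof is correct and follows essentially the same route as the paper: both reduce to the point-mass traces $\tau_x$ via $\tau_\mu(h(a))=\int_X \operatorname{tr}_n(h(a(x)))\,d\mu(x)$ and then use the identity $\rk((a(x)-\epsilon)_+)=n\,\mu_{\tau_x,a}((\epsilon,1])$. Your version is slightly more complete, because you explicitly justify, using total mass $1$ and the $\pi$-system $\{(\epsilon,1]:\epsilon\geq 0\}$, that agreement on the sets $(\epsilon,1]$ forces $\mu_{\tau_x,a}=\mu_{\tau_x,b}$, whereas the paper uses this step without comment.
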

\begin{proof}
We claim that $\rk(a(x))=\rk(b(x))$ for all $x\in X$ implies that $d_{\tau}(a)=d_{\tau}(b)$ for all $\tau\in T(M_{n}(C(X))$. 
By identifying $M_{n}(C(X))$ with $C(X,M_n)$ we get that for any tracial state $\tau\in T(M_{n}(C(X)))$ there exists a Radon measure $\mu_{\tau}$ on $X$ such that \cite{Mortari2009}  

\begin{align}
\tau(a)=\int_{X}tr_n(a(x))\mu_{\tau}(x)\label{2.1}
\end{align}

So when we consider the rank function of $a\in M_{n}(C(X))^+$ and equation \eqref{2.1} we get a useful connection between traces on the amplification and the canonical trace $tr_n$ on $M_{n}(\mathbb{C})$. 
\begin{align}
d_{\tau}(a)=\lim_{m\to\infty}\tau(a^{1/m})=\lim_{m\to\infty}\int_{X} tr_{n}(a(x)^{1/m})\mu_{\tau}(x)\end{align}

We can now apply the dominated convergence theorem along with our hypothesis that $\rk(a(x))=\rk(b(x))$ and obtain: 

\begin{align}
d_{\tau}(a)=&\lim_{m\to\infty}\tau(a^{1/m})\\
=&\lim_{m\to\infty}\int_{X} tr_{n}(a(x)^{1/m})\mu_{\tau}(x)\\
=&\int_{X} \lim_{m\to\infty}tr_{n}(a(x)^{1/m})\mu_{\tau}(x)\\
=& \int_{X} \lim_{m\to\infty}tr_{n}(b(x)^{1/m})\mu_{\tau}(x)\\
=&\lim_{m\to\infty}\int_{X} tr_{n}(b(x)^{1/m})\mu_{\tau}(x)\\
=&d_{\tau}(b)\\
\end{align}

Now we can show that the spectral profiles of $a$ and $b$ are equal. That is, for any trace $\tau\in T(M_{n}(C(X)))$, $f_{a}(\tau)=f_{b}(\tau)$. \\

Recall the definition of the spectral profile: $f_{a}(\tau)=\mu_{\tau,a}$, we prove that $\mu_{\tau,a}=\mu_{\tau,b}$ for all $\tau\in T(M_{n}(C(X)))$.\\

Note that $\rk((a(x)-\epsilon)_+)=\rk((b(x)-\epsilon)_+)$ for every $x$ and $\epsilon\geq 0$. Hence
\begin{align}
\frac{1}{n}\rk((a(x)-\epsilon)_+)&=\mu_{\tau_{x},a}((\epsilon,1])\\&=\mu_{\tau_{x},b}((\epsilon,1])\\&=\frac{1}{n}\rk((b(x)-\epsilon)_+)    
\end{align}\label{spectral-measure-at-dirac}

Suppose $\tau\in T(M_n(C(X)))$ is arbitrary, then there exists a probability measure $\nu_{\tau}\in \text{Prob}(X)$ for which 
\begin{align}
\tau(c)=\int_X tr_{n}(c(x))d\nu_{\tau}(x)   
\end{align}

Then for any $h\in C([0,1])$, we see that by functional calculus and the Riesz-Markov-Kakutani theorem, 

\begin{align}
    \int_{[0,1]}h(t)\,d\mu_{\tau,a}(t)
&=
\tau(h(a))\\
&=
\int_X
\operatorname{tr}_n(h(a(x)))
\,d\nu_\tau(x)\\
&=
\int_X
\left(
\int_{[0,1]}
h(t)\,d\mu_{\tau_x,a}(t)
\right)
d\nu_\tau(x).
\end{align}

Now we can apply our identity about spectral measures at extreme traces \eqref{spectral-measure-at-dirac}

\begin{align}
\int_{[0,1]}h(t)\,d\mu_{\tau,a}(t)
&=
\int_X
\left(
\int_{[0,1]}
h(t)\,d\mu_{\tau_x,b}(t)
\right)
d\nu_\tau(x)\\
&=
\tau(h(b))\\
&=
\int_{[0,1]}h(t)\,d\mu_{\tau,b}(t).
\end{align}
It follows that these measures are identical, $\mu_{\tau,a}=\mu_{\tau,b}.$
Now we prove the converse, suppose that $f_a(\tau)=f_{b}(\tau)$ for all $\tau \in T(A)$. Then for any $\tau\in T(A)$ the measures $\mu_{\tau,a}=\mu_{\tau,b}$ therefore
\begin{align}
d_{\tau}((a-\epsilon)_+ )=&\lim_{m\to\infty}\int_{[0,1]}((t-\epsilon)_{+}^{1/m})d\mu_{\tau,a}    \\ =&\int_{[0,1]}\chi_{(\epsilon,1]}(t) d\mu_{\tau,a}\\
=&\int_{[0,1]}\chi_{(\epsilon,1]}(t) d\mu_{\tau,b}\\
=&\lim_{m\to\infty}\int_{[0,1]}((t-\epsilon)_+^{1/m})d\mu_{\tau,b}\\
=& d_{\tau}((b-\epsilon)_+)
\end{align}
now taking $\tau$ to be an extreme trace, the induced measure is a Dirac measure over the underlying space $X$ so the equation above becomes:

\begin{align}
    d_{\tau}((a-\epsilon)_+ )=& \lim_{m\to\infty}tr_{n}\big((a(x)-\epsilon)_+^{1/m}\big)=\frac{1}{n}\rk((a(x)-\epsilon)_+)
\end{align}
for $x\in X$

\end{proof}

\noindent

\section{Proof of Theorem~\ref{main_result_1}}

\begin{customthm}{1}

    Suppose that $A=\varinjlim (A_{i},\varphi_{i})$ is a simple, type I Villadsen algebra, each $A_{i}=M_{r_{i}}\otimes C(X^{m_{i}})$ with diagonal eigenvalue maps, negligible point evaluations, and $r_i\to \infty$. There is a subsemigroup of $Cu(A)$ denoted $\text{Rksp}(A)$ and it is $\mathbf{Cu}$- isomorphic to $\{0\}\sqcup\mathrm{LAff}(T(A))_{++}$.\end{customthm}

\begin{proof}
$\text{Rksp}(A)$ consists of the Cuntz classes of admissible realizations of functions in $\{0\}\sqcup\mathrm{LAff}(T(A))_{++}$.\\ 
\begin{align}
\text{Rksp}(A):=\{0\}\sqcup\bigg\{ x_f: x_f \text{ is an admissible realization for } f\in \text{LAff}(T(A))_{++}\bigg\}
\end{align}
It suffices to show that for every $f \in \mathrm{LAff}(T(A))_{++}$ \\

there exists a unique   
$\langle a\rangle \in $\text{Rksp}(A)$ \subseteq Cu(A)$ such that  
\begin{align}
\widehat{a} = f \quad \text{on } T(A).
\end{align}
Let $A$ be as above. Moreover, it is known that 

\begin{align}
\bigcup_{i \in \mathbb{N}}
\phi_{i,\infty}^\bullet\bigl(\mathrm{Aff}(T(A_i))_{++}\bigr)
\end{align}

is uniformly dense in $\mathrm{Aff}(T(A))_{++}$.

Let $f \in \mathrm{LAff}(T(A))_{++}$. From \cite{goodearl1986partially}, there exists an increasing sequence  
\begin{align}
f_1 < f_2 < \cdots
\end{align}  

in $\mathrm{Aff}(T(A))_{++}$ such that  

\begin{align}
f = \lim_{i \to \infty} f_i.
\end{align}

For each $f_i$, proceed as follows:\newline

 By \ref{lem:recursive-selection}, choose $(j_{n_{i}})_{i\in \mathbb{N} }$ $g_{n_i}\in \mathrm{Aff}(T(A_{j_{n_{i}}}))_{++}$ and $\eta_{n_i}\in \R$ tending to zero which satisfy
 
\begin{align}
\frac{1}{r_{j_{n_{i}}}}<\eta_{n_{i}}\\
||\phi^{\bullet}_{j_{n_{i}},\infty}(g_{n_{i}})-f_{i}||\to0\\
\phi^{\bullet}_{j_{n_{i}},\infty}(g_{n_{i}})+\eta_{n_{i}}<f_{i}\\
\phi^{\bullet}_{j_{n_{i}},j_{n_{i+1}}}(g_{n_{i}})+\eta_{n_{i}}+\eta_{n_{i+1}}<g_{n_{i+1}}
\end{align}

now we are free to apply \ref{cor:bounded_fn_rep} to obtain $a_{n_{i}}\in M_{k_n}\otimes A_{j_{n_{i}}}$ for each $g_{n_{i}}$ and \ref{lem:compatibility_along_sequence} to conclude that $\phi_{j_{n_{i}},j_{n_{i+1}}}^{(k_i)}(a_{n_{i}})\precsim a_{n_{i+1}}$ for all $(n_{i})_{i\in \mathbb{N}}$ hence $\phi_{j_{n_{i}},\infty}^{(k_i)}(a_{n_{i}})\precsim \phi_{j_{n_{i+1}},\infty}^{(k_{i+1})}(a_{n_{i+1}})$. So we find that $(\phi_{j_{n_{i}},\infty}^{(k_i)}(a_{n_{i}}))_{i\in\mathbb{N}}$ forms an increasing sequence in the sense of Cuntz subequivalence, and thus their classes have a supremum in $Cu(A)$ we will denote by $\langle a_f\rangle$, this $a_f$ satisfies $\hat a_f=f$ by our approximation above. From this construction, we denote any Cuntz class arising from $f$ by $\langle a_f\rangle$. The uniqueness of $\langle a_f\rangle $ is quickly deduced from \ref{thm:interleaving}.\\

This produces a well-defined (\ref{thm:interleaving}) map \begin{align}\Phi :\{0\}\sqcup\text{LAff}(T(A))_{++}\to \text{Rksp}(A) 
\end{align}
defined by $f \mapsto \langle a_f\rangle$, and has the inverse map \begin{align}
    \Psi:\text{Rksp}(A)\to \{0\}\sqcup\text{LAff}(T(A))_{++}
\end{align} defined by $\langle a\rangle \mapsto \big(\tau\mapsto d_{\tau}(\langle a\rangle)\big)$ the dimension function associated to $\langle a_f\rangle$. We continue by proving that $\Phi$ is a $\textbf{Cu}$-morphism. 
\begin{enumerate}
    \item $\Phi$ is clearly additive. Let $f,g\in \text{LAff}(T(A))_{++}$, then $\Phi(f+g)=\langle a_{f+g}\rangle\in \text{Rksp}(A)$. To see that $\langle a_f\rangle+\langle a_g\rangle=\langle a_{f+g}\rangle$, note that $\Phi(f)+\Phi(g)=\langle a_f\rangle+\langle a_g\rangle=\langle a_f\oplus a_g\rangle$. It is also straightforward that $ \langle a_f\oplus a_g\rangle$ is an element of $\text{Rksp}(A)$, since the direct sum of two suprema of diagonal elements is the supremum of the direct sum of the diagonal approximants \ref{def:cuntz-semigroup-axioms}.  Since $\Psi (\langle a_f\oplus a_g\rangle)=d_{\tau}(a_f\oplus a_g)=d_{\tau}(a_f)+d_{\tau}(a_g)=f+g$ and the elements in $\text{Rksp}(A)$ are determined by their rank functions via \ref{thm:interleaving}, additivity is proved.
    \item $\Phi$ preserves order, as $f\leq g$ in $\text{LAff}(T(A))$ implies that $\langle a_f\rangle\leq \langle a_g\rangle$, this follows from \ref{thm:interleaving}.
    \item $\Phi$ preserves the zero element, $\Phi(0)=\langle a_0\rangle $. Where $d_{\tau}(a_0)=0$ over the entire trace space, hence $a_0=0$ since $A$ is simple and thus every trace is faithful.
    \item $\Phi$ preserves suprema of increasing sequences. Suppose that $(f_i)$ is an increasing sequence with $\sup_i(f_i)=f$, then $\langle a_{f_i}\rangle\leq\langle a_{f_{i+1}}\rangle$ for every $i\in \mathbb{N}$. In particular, $\langle a_f\rangle$ is an upper bound for the sequence $(\langle a_{f_i}\rangle)_{i}$. Suppose now that $\langle b\rangle\in \text{Rksp}(A)$ is an upper bound for the sequence, in other words: $\langle b\rangle\geq \langle a_{f_i} \rangle$ for all $i\in \mathbb{N}$. Now take the dimension function of both sides, $d_{\tau}(\langle b\rangle)\geq d_{\tau}(\langle a_{f_i}\rangle)$ for all $i\in \mathbb{N}$. We rewrite this as $d_{\tau}(\langle b\rangle)\geq f_i$ for all $i\in \mathbb{N}$, but the supremum of this sequence is $f$, so $d_{\tau}(\langle b\rangle)\geq f$, now we apply the map to both sides to get the result.
    \item It follows that $\Phi$ preserves compact containment. More explicitly, suppose that $f\ll g$, let $(z_n)_{n\in\mathbb{N}}\in \text{Rksp}(A)$ be an increasing sequence such that $\Phi(g)\leq \sup_{n} (z_{n})=z$, then by taking the dimension function of both sides, we see that $g=d_{\tau}(\langle a_g\rangle)\leq d_{\tau}(z)$ for the increasing sequence $(d_{\tau}(z_n))_{n\in\mathbb{N}}$, since $f\ll g$, there exists $m_0\in \mathbb{N}$ satisfying $f\leq d_{\tau}(z_{m_0})$, again, taking $\Phi$ of both sides gives the result. \\
    \item To show injectivity, suppose that $f,g\in \text{LAff}_{++}(T(A),[0,\infty])$ with corresponding $a_f ,a_g\in (A\otimes \mathcal{K})_+$ satisfying $\langle a_f\rangle=\langle a_g\rangle$, then it follows immediately that their rank functions are equal since rank functions are constant along Cuntz classes.

\end{enumerate}

To finish the proof, we also note that $\Psi$ is a $\textbf{Cu}$-morphism.

\begin{enumerate}
    \item $\Psi$ is additive \ref{gardella-dim-fns}.
    \item $\Psi$ preserves order \ref{gardella-dim-fns}.
    \item $\Psi$ preserves the zero element \ref{gardella-dim-fns}.
    \item $\Psi$ preserves increasing sequences \ref{gardella-dim-fns}. 
    \item $\Psi$ preserves compact containment, suppose that $\langle a\rangle\ll\langle b\rangle$ in $\text{Rksp}(A)$ and that $(g_n)_{n\in \mathbb{N}}$ is an increasing sequence with supremum $g$ satisfying $g\geq \Psi(\langle b\rangle)$. By definition of $\Psi$, this gives us that $d_{\tau}(\langle b\rangle)\leq g$ on $T(A)$. By our construction above, find $a_{g_n}\in \text{Rksp}(A)$ with $\hat a_{g_n}=g_n$, then we have, by \ref{thm:interleaving} that $\langle b\rangle\leq \langle a_g\rangle=\sup_n \langle a_{g_n}\rangle$ in $\text{Rksp}(A)$, by hypothesis, there exists $n_0\in\mathbb{N}$ for which, $\langle a\rangle\leq \langle a_{g_{n_0}}\rangle$, now apply $\Psi$ to both sides and conclude that $\Psi(\langle a\rangle)\leq g_{n_0}$.
\end{enumerate}

\end{proof}

\section{Proof of Theorem~\ref{main_result_2}}\label{proof}

\begin{customthm}{2}
    There exists a simple, separable, AH $C^*$-algebra $A$ and two positive elements $a$ and $b$ in $A$ with spectrum equal to $[0,1]$ that are not approximately unitarily equivalent, yet $f_a(\tau) =f_b(\tau) $ for all $\tau\in T(A)$.
\end{customthm}

\begin{proof}
We use a diagonal AH construction of the same form as that in
\cite{TomsAnnals}. Choose positive integers $n_i$, with $n_1=1$, such that
\begin{equation}\label{thm2:product}
  \prod_{j=2}^{\infty}\frac{n_j}{n_j+j}>\frac12.
\end{equation}
For example, one may take $n_j=j2^{j+2}$ for $j\geq2$:
the inequality $\prod_j(1-t_j)\geq1-\sum_jt_j$ for $0\leq t_j\leq1$
shows that the resulting product is greater than $7/8$.
Set
\[
 N_i=\prod_{j=1}^i n_j,\qquad
 M_i=\prod_{j=2}^i(n_j+j),\qquad
 k_i=4M_i,\qquad X_i=[0,1]^{6N_i},
\]
where an empty product is $1$, and put $A_i=M_{k_i}(C(X_i))$.
Under the identification $X_i=X_{i-1}^{n_i}$, let
$\pi_l^i:X_i\to X_{i-1}$ be the $l$th coordinate projection. Define
\[
 \phi_{i-1}(g)=\operatorname{diag}\bigl(
 g\circ\pi_1^i,\ldots,g\circ\pi_{n_i}^i,
 g(x_1^{i-1}),\ldots,g(x_i^{i-1})\bigr),\qquad i\geq2.
\]
Choose the evaluation points so that, for each $s$ and each nonempty open
subset $U\subset X_s$, some composite map $\phi_{s,t}$ has a constant
eigenvalue map with value in $U$. Such a choice is possible by enumerating
countable bases for all the $X_s$, assigning each requirement to a distinct
later stage, and using the surjectivity of the coordinate projections to
choose an evaluation point. Consequently, every nonzero positive element
of $A_s$ has full image at some later stage, and
$A=\varinjlim(A_i,\phi_i)$ is simple. The connecting maps are unital and
injective, so $A$ is a separable unital AH algebra and we may identify
each $A_i$ with its image in $A$.

\smallskip
Let $L$ be the Hopf line bundle over $S^2$, represented in $M_4(C(S^2))$
by the projection
\[
 \xi(v)=\frac12\begin{pmatrix}
 1+v_3&v_1-\mathrm{i}v_2&0&0\\
 v_1+\mathrm{i}v_2&1-v_3&0&0\\
 0&0&0&0\\0&0&0&0
 \end{pmatrix},\qquad v=(v_1,v_2,v_3)\in S^2.
\]
Write $\theta_1=e_{11}\in M_4(\mathbb C)$ for the constant rank-one
projection. Put $c=(1/2,1/2,1/2)$ and
\[
 B=\{s\in[0,1]^3:1/8<\|s-c\|<3/8\},\qquad
 r(s)=\frac{s-c}{\|s-c\|}\quad(s\in B).
\]
Choose a continuous function $f:[0,1]^3\to[0,1]$ supported in $B$ and
equal to $1$ when $\|s-c\|=1/4$. 

Using $X_1=[0,1]^3\times[0,1]^3$, define
\begin{align*}
 a_1(s,t)=&
 \begin{cases}
 f(s)\xi(r(s)),&s\in B\\
 0,&s\notin B
 \end{cases}\\
 \qquad b_1(s,t)=&f(s)\theta_1.
\end{align*}
These are continuous positive contractions: the support of $f$ is a
compact subset of $B$, so the first formula extends continuously by zero.
At each point $(s,t)$, both matrices have eigenvalues
$\{f(s),0,0,0\}$. Since $f$ takes every value in $[0,1]$, both $a_1$ and
$b_1$ have spectrum $[0,1]$. Let
$a=\phi_{1,\infty}(a_1)$ and $b=\phi_{1,\infty}(b_1)$.
Injectivity preserves their spectra, so $\sigma(a)=\sigma(b)=[0,1]$.

\smallskip

For every $F\in C([0,1])$ and every $(s,t)\in X_1$, the normalized
matrix trace satisfies
\[
 \operatorname{tr}_4(F(a_1(s,t)))
 =\frac14F(f(s))+\frac34F(0)
 =\operatorname{tr}_4(F(b_1(s,t))).
\]
Every tracial state on $A_1$ is obtained by integrating
$\operatorname{tr}_4$ against a probability measure on $X_1$.
Thus $\tau_1(F(a_1))=\tau_1(F(b_1))$ for all $\tau_1\in T(A_1)$.
For any $\tau\in T(A)$, apply this equality to
$\tau_1=\tau\circ\phi_{1,\infty}$ to obtain
\[
 \tau(F(a))=\tau(F(b)),\qquad F\in C([0,1]).
\]
The associated probability measures are therefore equal, and hence
$f_a(\tau)=f_b(\tau)$ for every $\tau\in T(A)$.

\smallskip

Now we prove that the obstruction is present in each finite stage. For $i\geq1$, write $a_i=\phi_{1,i}(a_1)$ and
$b_i=\phi_{1,i}(b_1)$, with $\phi_{1,1}=\operatorname{id}$.
The composite $\phi_{1,i}$ has $M_i$ eigenvalue maps to $X_1$.
Exactly $N_i$ are the coordinate projections in
$X_i=X_1^{N_i}$; the remaining $P_i=M_i-N_i$ are constant maps.
Indeed, a composition is a coordinate projection precisely when all of
its factors are coordinate projections; any point evaluation makes the
composition constant. By \eqref{thm2:product},
\begin{equation}\label{thm2:rank-budget}
 \frac{N_i}{M_i}=\prod_{j=2}^i\frac{n_j}{n_j+j}>\frac12,
 \qquad P_i<N_i.
\end{equation}

Fix $t_0\in[0,1]^3$ and embed $Y_i=(S^2)^{N_i}$ in $X_i$ by
\[
 \jmath_i(v_1,\ldots,v_{N_i})
 =\bigl((c+v_1/4,t_0),\ldots,(c+v_{N_i}/4,t_0)\bigr).
\]
Let $\eta_j:Y_i\to S^2$ be the $j$th coordinate projection, and set
\[
 E_i=\bigoplus_{j=1}^{N_i}\eta_j^*L.
\]
After a fixed permutation of the matrix blocks, the restrictions
$\alpha_i=\jmath_i^*(a_i)$ and $\beta_i=\jmath_i^*(b_i)$ have the form
\begin{align*}
 \alpha_i&=\operatorname{diag}\bigl(
 \xi\circ\eta_1,\ldots,\xi\circ\eta_{N_i},C_i\bigr),\\
 \beta_i&=\operatorname{diag}\bigl(
 \theta_1,\ldots,\theta_1,D_i\bigr).
\end{align*}
Here $C_i,D_i$ are constant positive matrices, each comprising $P_i$
blocks of size $4$, with each block of rank at most one.
The scalar cutoff values in these constant blocks are retained;
in particular, $C_i$ and $D_i$ need not be projections.
At $i=1$ the constant blocks are absent.

Set $\ell_i=\operatorname{rank}(C_i)\leq P_i<N_i$.
Because $C_i$ is constant, its support is a constant projection.
It follows that the support projection $R_i$ of $\alpha_i$ is continuous
and has associated vector bundle
$E_i\oplus\theta_{\ell_i}$, where $\theta_m$ denotes the trivial
rank-$m$ bundle over $Y_i$. Also set
\[
 q_i=\operatorname{diag}(\underbrace{\theta_1,\ldots,\theta_1}_{N_i
 \text{ blocks}},0_{4P_i}).
\]
This is a constant rank-$N_i$ projection and
$q_i\beta_iq_i=q_i$.

We claim that $\theta_{N_i}$ is not isomorphic to a subbundle of
$E_i\oplus\theta_{\ell_i}$. To see this, let
$h=c_1(L)\in H^2(S^2;\mathbb Z)$, which is a generator up to sign,
and put $h_j=\eta_j^*h$. The cohomology ring is
\[
 H^*(Y_i;\mathbb Z)
 \cong\mathbb Z[h_1,\ldots,h_{N_i}]/(h_1^2,\ldots,h_{N_i}^2).
\]
The Whitney product formula gives
\begin{equation}\label{thm2:chern}
 c(E_i)=\prod_{j=1}^{N_i}(1+h_j),\qquad
 c_{N_i}(E_i)=h_1\cdots h_{N_i}\neq0.
\end{equation}
If $\theta_{N_i}$ embedded in $E_i\oplus\theta_{\ell_i}$, its
orthogonal complement would be a bundle $G_i$ of rank $\ell_i$, with
\[
 E_i\oplus\theta_{\ell_i}\cong\theta_{N_i}\oplus G_i.
\]
Consequently $c_{N_i}(E_i)=c_{N_i}(G_i)=0$, since $\ell_i<N_i$,
contradicting \eqref{thm2:chern}.

We now prove the uniform estimate
\begin{equation}\label{thm2:uniform}
 \|x a_i x^*-b_i\|\geq\frac12
 \qquad\text{for every }x\in A_i\text{ and every }i\geq1.
\end{equation}
Suppose instead that the norm is less than $1/2$, and write
$\bar x=\jmath_i^*(x)$. Since restriction and compression are
contractive, the positive element
\[
 d=q_i\bar x\alpha_i\bar x^*q_i
\]
in the corner $q_iM_{k_i}(C(Y_i))q_i$ satisfies
$\|d-q_i\|<1/2$. It is therefore invertible in that corner.
Define, with $d^{-1/2}$ taken in the corner,
\[
 v=\alpha_i^{1/2}\bar x^*q_i d^{-1/2}.
\]
Then $v^*v=q_i$ and $R_iv=v$, so $vv^*\leq R_i$.
Thus $v$ gives a vector-bundle embedding
$\theta_{N_i}\hookrightarrow E_i\oplus\theta_{\ell_i}$,
contradicting the preceding paragraph. This proves
\eqref{thm2:uniform}.

\smallskip

The union of the $A_i$ is dense in $A$, and the inclusions are isometric.
Hence \eqref{thm2:uniform}, together with continuity of the map
$x\mapsto xax^*$, implies
\[
 \|xax^*-b\|\geq\frac12\qquad(x\in A).
\]
In particular,
\[
 d_U(a,b)=\inf_{u\in\mathcal U(A)}\|uau^*-b\|\geq\frac12.
\]
Thus $a$ and $b$ are not approximately unitarily equivalent, although
their spectral profiles agree.
\end{proof}

\section{Final Remarks}

The two main results of this paper paint a complementary picture of the Cuntz semigroup for AH algebras with diagonal maps, particularly Villadsen algebras of the first type.

Theorem~\ref{main_result_1} identifies a tractable subsemigroup of $Cu(A)$, $\text{Rksp(A)}$. This semigroup realizes the ordered part of the invariant that consists of the rank functions, it acts as a spine onto which all other non-compact elements in $Cu(A)$ collapse after tensoring with the Jiang-Su algebra. 

Theorem~\ref{main_result_2} shows that the Cuntz semigroup of $A$ contains more information than spectral data alone can detect: in the absence of strict comparison, there exist positive elements $a$ and $b$ with identical spectral profiles $f_a = f_b$ (hence $d_P(a,b) = 0$) that are nonetheless not approximately unitarily equivalent ($d_U(a,b) > 0$). The gap between $d_P$ and $d_U$ is therefore non-trivial, and its origin is topological; it is witnessed by the presence of non-trivial vector bundles over the seed spaces of the Villadsen algebra.

\bibliography{bibliography}
\bibliographystyle{abbrv}
\end{document}